\theoremstyle{plain}
\newtheorem{thm}{Theorem}[section]
\newtheorem{rem}[thm]{Remark}
\newtheorem{ques}[thm]{Question}
\newtheorem{exam}[thm]{Example}
\newtheorem{prob}[thm]{Problem}
\def\cal{\mathcal}
\def\bbb{\mathbb}
\def\op{\operatorname}
\renewcommand{\phi}{\varphi}
\newcommand{\N}{\bbb{N}}
\newcommand{\Z}{\bbb{Z}}
\newcommand{\Q}{\bbb{Q}}
\begin{document}
\title[On representing coordinates of points on elliptic curves]{On representing coordinates of points on elliptic curves by quadratic forms}

\author{Andrew Bremner and Maciej Ulas}

\keywords{Diophantine equations, rational points, elliptic curves, representations, quadratic forms}
\subjclass[2010]{11D25, 11D41, 11E16, 11G05}
\thanks{The research of the second author was partially supported by the grant of the Polish National Science Centre no. UMO-2012/07/E/ST1/00185}

\maketitle
\begin{abstract}
Given an elliptic quartic of type  $Y^2=f(X)$ representing an elliptic curve of positive rank over $\Q$, we investigate the question of when the $Y$-coordinate can be represented by a quadratic form of type $ap^2+bq^2$. In particular, we give examples of equations of surfaces of type
$c_0+c_1x+c_2x^2+c_3x^3+c_4x^4=(ap^2+bq^2)^2$, $a,b,c \in \Q$ where we can deduce the existence of infinitely many rational points.  We also investigate surfaces of type $Y^2=f(a p^2+b q^2)$ where the polynomial $f$ is of degree $3$.
\end{abstract}

\section{Introduction}\label{sec1}

Cornelissen and Zahidi~\cite{CorZa} introduce the quartic
surface defined by the equation
\begin{equation}
\label{CZ}
V: \; (A^2+B^2)(A^2+11 B^2) = 225 (P^2-5 Q^2)^2,
\end{equation}
with interest in the rational points on $V$ in regard to undecidability questions
in mathematical logic.
There is an associated elliptic curve
\[ E: \; Y^2 = (X^2+1)(X^2+11), \]
and the existence of rational points on $V$ is equivalent to the existence of rational points
on $E$ whose $Y$-coordinate is representable by the quadratic form $15(P^2-5Q^2)$. The curve
$E$ has rational rank 1, with generator $(\frac{1}{2},\frac{15}{4})$,
so the group
$E(\Q)$ is explicitly known. But the problem of deciding whether the $Y$-coordinate of
a point can be represented by the form $15(P^2-5Q^2)$ demands knowledge of the factorization
of the $Y$-coordinate into primes, which becomes computationally intractable for a point given by
a large multiple of the generator. \\
Here, we investigate the question of when the $Y$-coordinate of a rational point on
an elliptic quartic can be represented by a quadratic form, and in particular give
several examples of equations of surfaces of type
\[ \cal{V} :\; c_0 + c_1 x + c_2 x^2 + c_3 x^3 + c_4 x^4 = (a p^2 + b q^2)^2, \qquad a,b,c_i \in \Q \]
where we can deduce the existence of infinitely many rational points. This is achieved by constructing a hyperelliptic quartic curve $\cal{C}'$ with map $\psi: \cal{C}' \rightarrow \cal{V}$. When $\cal{C}'$ is an elliptic curve of positive rank, the result will follow.\\
Second, we consider elliptic curves $E:\,Y^2=f(X)$, $f$ cubic, and use similar techniques to investigate when
there can be infinitely many rational points on $E$ with $X$-coordinate representable by a quadratic form $ap^2+bq^2$.
We illustrate the ideas with many examples. Throughout the paper, an important role is played by the use of machine computations, particularly using the computer algebra system Magma~\cite{Mag}.

\section{First construction}\label{sec2}

Let $a, b\in\Z$, where we suppose that the quadratic form $ax_1^2+bx_2^2$ is irreducible, and let $f(X)=\sum_{i=0}^{4}c_{i}X^{i}\in\Q[x]$ be a non-singular quartic. We are interested in when the quartic curve
$$
\cal{C}:\;Y^2=f(X)
$$
can have infinitely many rational points with the $Y$-coordinate being representable by the quadratic
form $a x_1^2+b x_2^2$.
Consider the quartic surface given by the equation
$$
\cal{V}:\;(ap^2+bq^2)^2=f(X).
$$
Multiplying through by $a^2$, we may suppose without loss of generality that $a=1$.
Suppose $(p_0,q_0,X_0) \in \cal{V}(\Q)$. Then $f(X_0) \neq 0$, otherwise $p_0^2+b q_0^2=0$ and the form $p^2+b q^2$ is reducible. It is clear that there exist infinitely many points $(p,q,X_0) \in \cal{V}(\Q)$, because
the conic
$p_0^2+b q_0^2 = p^2+b q^2$ is rationally parameterizable. Further, from this observation and the fact that the curve $p^4=f(X)$ is of genus 3, it is also clear that we may assume if necessary that $p_{0}q_{0}\neq 0$. When we refer to
$\cal{V}(\Q)$ being infinite, we shall henceforth mean that $\pi_X(\cal{V}(\Q))$ is infinite,
where $\pi_X: \cal{V}(\Q) \rightarrow \Q$ is the projection onto the $X$-coordinate.
It is clear that a necessary condition for $\cal{V}$ to have infinitely many rational points is that
the set $\cal{C}(\Q)$ is infinite.

Following a suitable change of variable we can move the point $(p_0,q_0,X_0)$ to infinity and eliminate the coefficient of $X^{3}$ in $f$. We thus consider the quartic surface
\begin{equation*}
\cal{V}:\;(p^2+bq^2)^2=m^2X^4+a_{2}X^2+a_{1}X+a_{0}, \qquad m = p_0^2+b q_0^2.
\end{equation*}
In order to find more points on $\cal{V}$, write
\begin{equation}\label{sub1}
p=p_0 T,\quad q= q_0 T+\frac{U}{p_0},\quad X=T+V,
\end{equation}
where $U, V, T$ are to be determined. This substitution gives
$$
(p^2+bq^2)^2-f(X)=\sum_{i=0}^{3}C_{i}T^{i},
$$
where $C_{i}\in \Q[U,V]$ for $i=0,1,2,3$. In particular $C_{3}=-4m(mp_0V-b q_0 U)/p_0$. Taking $V=b q_0 U/mp_0$,
there results $F(T):=B_{2}T^2+B_{1}T+B_{0}=0$ where $B_{i}=B_{i}(U)$  is in general of degree $4-i$ for $i=0,1,2$.
Now a point of the form (\ref{sub1}) lies on $\cal{V}$ if and only if $F(T)=0$ has rational roots, which is equivalent to the existence of $U \in \Q$ such that the discriminant of $F$ with respect to $T$
is a square in $\Q$. We thus consider the curve $\cal{C}':\; W^2/m^2 = B_1^2-4 B_0 B_2$, that is,
\begin{equation*}
\cal{C}':\;W^2=-8b^3U^6+4b^2a_2U^4+8ba_0m^2 U^2+(a_1^2-4 a_0 a_2) m^2=:G(U).
\end{equation*}
Here, $T=(-B_1+W/m)/(2 B_2)$.
For general coefficients $a_{0}, a_{1}, a_{2}$ and given $b$ the curve $\cal{C}'$ is hyperelliptic of genus 2. By Faltings Theorem, the set $\cal{C}'(\Q)$ is finite. Thus a necessary condition for $\cal{C}'$ to have infinitely many rational points is the vanishing of the discriminant (with respect to $U$) of the polynomial $G$. In this case the genus of $\cal{C}'$ is at most $1$ and there is a chance for $\cal{C}'$ to have infinitely many rational points.

\begin{rem}
\rm{
If the approach of this section is used on the surface (\ref{CZ}), then the corresponding curve $\mathcal{C}'$ is of genus 2.
}
\end{rem}

\noindent
We have
\begin{equation*}
\op{Disc}_{U}(G)=2^{21}b^{15}(a_1^2-4 a_0 a_2)m^2 \op{Disc}_{X}(f(X))^2,
\end{equation*}
and since $\op{Disc}_{X}(f(X)) \neq 0$, then $\op{Disc}_{U}(G)=0$ if and only if $a_1^2-4 a_0 a_2=0$.
Hence the polynomial $f$ takes the form
\begin{equation*}
f(X)=m^2 X^4 + c(d X+e)^2
\end{equation*}
for some $c, d, e \in\Q$ with $ce\neq 0$. If $d \neq 0$ then we can clearly assume that $d=1$.
The corresponding polynomial $G$ is divisible by $4U^2$ and the curve $\cal{C}'$ may
now be taken in the form
\begin{equation*}
\cal{C}':\; w^2=-b(2b^2U^4-b c d^2U^2-2 c e^2 m^2),
\end{equation*}
with $W=2Uw$. \\ \\
Tracing back the maps, we obtain the mapping
$$
\psi: \mathcal{C}'\ni (U,w)\mapsto (p,q,X)\in \mathcal{V}
$$
 given by
 \begin{align}\label{map1}
p&=\frac{p_0cdem-bq_0U(2b U^2-cd^2)+p_0Uw}{m(2b U^2-cd^2)},\notag\\
q&=\frac{cdeq_0m+p_0U(2b U^2-cd^2)+q_0Uw}{m(2b U^2-cd^2)}, \\
X&=\quad \frac{cdem+Uw}{m(2b U^2-cd^2)}. \notag
\end{align}
The following theorem now follows:
\begin{thm}\label{firstthm}
Let $\mathcal{V}$ denote the surface $(p^2+b q^2)^2 = m^2 X^4 +c(d X+e)^2$, and let $\mathcal{C}'$ be the quartic curve $w^2 = -b(2b^2U^4-b c d^2U^2-2 c e^2 m^2)$. Then $\mathcal{C}'(\Q)$ infinite implies $\mathcal{V}(\Q)$ infinite.
\end{thm}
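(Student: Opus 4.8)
The plan is to exhibit the explicit map $\psi:\mathcal{C}'\to\mathcal{V}$ given by (\ref{map1}) and show it is well-defined and non-constant on $\pi_X$, so that an infinite set of rational points on $\mathcal{C}'$ produces an infinite set of rational $X$-coordinates on $\mathcal{V}$. Concretely, given a rational point $(U,w)$ on $\mathcal{C}'$ with $2bU^2-cd^2\neq 0$, set $p,q,X$ by the formulas in (\ref{map1}); the construction preceding the theorem guarantees that $(p,q,X)$ then lies on $\mathcal{V}$, since $w$ was defined precisely so that the quadratic $F(T)=B_2T^2+B_1T+B_0$ has the rational root $T=(-B_1+W/m)/(2B_2)$ with $W=2Uw$, and the substitution (\ref{sub1}) with $V=bq_0U/(mp_0)$ reduces membership in $\mathcal{V}$ to exactly $F(T)=0$.

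**First I would** record the reduction already carried out in the section: after moving $(p_0,q_0,X_0)$ to infinity and killing the cubic term, the condition $\op{Disc}_U(G)=0$ forces $a_1^2-4a_0a_2=0$, hence $f(X)=m^2X^4+c(dX+e)^2$; then $G(U)=4U^2\cdot(-b)(2b^2U^4-bcd^2U^2-2ce^2m^2)$, so writing $W=2Uw$ turns $\mathcal{C}'$ into the stated quartic $w^2=-b(2b^2U^4-bcd^2U^2-2ce^2m^2)$. This is purely bookkeeping and can be stated in a sentence or two.

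**The second step** is to verify that $\psi$ is genuinely defined over a cofinite subset of $\mathcal{C}'(\Q)$ and that $\pi_X\circ\psi$ is non-constant. The denominator $m(2bU^2-cd^2)$ vanishes only for finitely many $U$, so $\psi$ is defined away from those fibres; since $\mathcal{C}'$ is a curve, removing finitely many points leaves an infinite set whenever $\mathcal{C}'(\Q)$ is infinite. For non-constancy of the $X$-coordinate one checks that $X=(cdem+Uw)/(m(2bU^2-cd^2))$ is not identically constant as a function on $\mathcal{C}'$ — equivalently, that $cdem+Uw - X_1\cdot m(2bU^2-cd^2)$ does not vanish identically on $\mathcal{C}'$ for any fixed $X_1$; this follows because $\mathcal{C}'$ is irreducible and the function $Uw$ on it is non-constant (it is odd under $w\mapsto-w$ while the rest of the numerator, $cdem$, is even, and the denominator is even). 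Hence $\pi_X(\psi(\mathcal{C}'(\Q)))$ is infinite, which by the convention fixed earlier in the section is the meaning of ``$\mathcal{V}(\Q)$ infinite.''

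**The main obstacle** is the last point: ruling out that $\psi$ collapses to a single $X$-value (or to a constant map) on the infinitely many rational points available. Degenerate choices of $b,c,d,e,m$ could in principle make the map constant or make $\mathcal{C}'$ itself reducible or of genus $0$ with all rational points in the bad locus; so one should note that the hypothesis is precisely that $\mathcal{C}'(\Q)$ is infinite — when $\mathcal{C}'$ is a genuine curve with infinitely many rational points, the parity argument above shows $Uw$, hence $X$, takes infinitely many values, and the theorem follows. The remaining verification — that substituting (\ref{map1}) into $(p^2+bq^2)^2-f(X)$ gives identically zero — is a routine algebraic identity best left to the machine computation already flagged in the introduction, so I would simply assert it and cite the Magma verification.
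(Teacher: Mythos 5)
Your proposal is correct and follows essentially the same route as the paper: push rational points of $\mathcal{C}'$ through the explicit map $\psi$ of (\ref{map1}) and show that the image cannot collapse onto finitely many points of $\mathcal{V}$. The only difference is the non-collapse verification: the paper eliminates $w$ between $Y=p^2+bq^2$ and the equation of $\mathcal{C}'$ to obtain a degree-$12$ polynomial in $U$ with an explicitly nonzero leading coefficient, whereas you argue that the $X$-coordinate is a non-constant function on $\mathcal{C}'$ via the parity of $Uw$ under the involution $w\mapsto -w$; both devices are sound.
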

\begin{proof}
Assume that the set $\cal{C}'(\Q)$ is infinite. To get the result it is enough to prove that the image of the map $\psi$ constructed above is infinite in the set $\cal{V}(\Q)$. This is equivalent to the fact that if $(p',q',X')\in\cal{V}(\Q)$ and $Y=p'^2+bq'^2$ then
there are only finitely many rational points $(U,w)\in\cal{C}'$ such that $p^2+bq^2=Y$, where $p, q$ are given by (\ref{map1}). Eliminating $w$ between the equations
\[ Y = p^2+b q^2, \qquad w^2 = -b(2b^2U^4-b c d^2U^2-2 c e^2 m^2) \]
gives an equation of degree $12$ in $U$, which is not identically zero since the coefficient of $U^{12}$ equals $4b^6p_0^4(-1+2p_0^4)^2(p_0^2+b q_0^2)^2$. Hence there are only finitely many possibilities for $U$, as required.
\end{proof}
\begin{rem}
{\rm
It has been pointed out to us that $\mathcal{C}'(\Q)$ infinite in the hypothesis
of the theorem may be weakened to $\mathcal{C}'(\Q) \neq \emptyset$ and $\mathcal{C}(\Q)$
infinite. For if $E'$ denotes the elliptic curve associated to $\mathcal{C}'$, then
the composition $\phi: \mathcal{C}' \rightarrow \mathcal{V} \rightarrow \mathcal{C}$
extends to a finite morphism $E' \rightarrow E$. If $\mathcal{C}'(\Q) \neq \emptyset$,
take a rational point in $\mathcal{C}'(\Q)$, corresponding to $O' \in E'(\Q)$, and
let its image under $\phi$ correspond to $O \in E(\Q)$.
Then $\phi$ induces an isogeny $(E',O') \rightarrow (E,O)$ of degree $2$. Thus
if $\mathcal{C}(\Q)$ is infinite, then either $\mathcal{C}'(\Q)$ is empty
or $\mathcal{C}'(\Q)$ is infinite.
}
\end{rem}

\noindent
It is straightforward to compute a cubic model $E$ for $\mathcal{C}$; $E$ takes the form
\[ E: \; y^2 = x(x^2 +c d^2 x-4 c e^2 m^2), \]
with map $\mathcal{C} \rightarrow E$ given by
\[ (x,y) = (-2m(Y-mX^2), \; -2m(2 m^2 X^3-2m X Y+c d^2 X+c d e)). \]
The curve $\mathcal{C}'$ occurs as a homogenous space in a standard $2$-descent on $E$,
on setting $x$ equal either to $-2b$ or $2 b c$ in $\Q^*/\Q^{*2}$.  It follows
that $\mathcal{C}'(\Q) \neq \emptyset$ provided that there exists a point
$(X_0,Y_0) \in \mathcal{C}(\Q)$ with $Y_0-X_0^2 \equiv b \delta \bmod{\Q^{*2}}$,
where $\delta=m \text{ or } -mc$.

\begin{exam}
{\rm
Let $(p_0,q_0)=(1,0)$ and $(b,c,d,e)=(1,\ell,0,1)$ where $\ell$ is prime, so that $\cal{C}:\;Y^2=X^4+\ell$ and $\cal{C}':\;w^2=2(\ell-U^4)$.
Assume the rank of $\cal{C}$ is positive, with $(X_0,Y_0)$ a point of infinite order. Without loss of
generality, on changing the sign of $Y_0$ if necessary, $Y_0-X_0^2=\alpha^2$ and $Y_0+X_0^2=\ell/\alpha^2$. Then
$(U,w)=(\alpha,2\alpha X_0)$ is a point on $\cal{C}'$ of infinite order. Accordingly, the set of rational points on the surface
$$
\cal{V}:\;(p^2+q^2)^2=X^4+\ell
$$
is infinite. The above mapping reduces to
\[ (U,w) \rightarrow (p,q,X)=\left(\frac{w}{2U}, U, \frac{w}{2U}\right). \]
}
\end{exam}

\begin{exam}
{\rm
Set $D=m^2(m^2+2n^2)$ with point $P=(n,m^2+n^2)$ of infinite order on
the curve $\cal{C}: Y^2=X^4+D$. Take $b=1$, so that $\cal{C}': w^2=2(D-U^4)$
with point of infinite order $P'=(m, 2m n)$. Then the set of rational points on the
surface
$$
\cal{V}:\;(p^2+q^2)^2=X^4+D
$$
is infinite.
}
\end{exam}


\section{Quartics of type $X^4+a_2 X^2 +a_0$} \label{sec3}
In this section we are interested in rational points $(X,Y)$ lying on the quartic curve
$$
\cal{C}:\;Y^2=X^4+a_{2}X^2+a_{0},
$$
with $Y$ representable by the quadratic form $p^2+bq^2$. Before stating any result, we observe that if the set $\cal{C}(\Q)$ is infinite then there are infinitely many rational points in $\cal{C}(\Q)$ with $Y$-coordinate represented by the quadratic form $p^2-(a_{2}^2-4a_0)q^2$. Indeed, this is a consequence of the addition law on $\cal{C}$ which says that if $P_{i}=[i]P_{1}=(X_{i},Y_{i})=(U_{i}/W_{i},V_{i}/W_{i}^2)$, where $P_{1}=(X_{1},Y_{1})$ is a point on $\cal{C}$ of infinite order, then
\begin{equation*}
\begin{array}{lll}
  U_{2i}=U_{i}^4-a_0 W_{i}^4, & & U_{2i+1}U_{1}=U_{i}^2U_{i+1}^2-a_0 W_{i}^2W_{i+1}^2,  \\
  V_{2i}=V_{i}^4-(a_{2}^2-4a_{0})U_{i}^2W_{i}^2, & & W_{2i+1}W_{1}=U_{i}^2W_{i+1}^2-U_{i+1}^2W_{i}^2,  \\
  W_{2i}=2U_iV_iW_i, & &V_{2i+1}V_{1}=V_{i}^2V_{i+1}^2-(a_{2}^2-4a_{0})U_{i}^2U_{i+1}^2W_{i}^2W_{i+1}^2.
\end{array}
\end{equation*}
for $i\in\N$; see \cite{Chud}. We thus see that the $Y$-coordinate of the point $P_{2i}$ is representable by the form $p^2-(a_{2}^2-4a_{0})q^2$. The same property holds for the point $P_{2i+1}$ provided that $Y_{1}$ is also so representable.
\begin{rem}{\rm
These formulas show the restrictive nature of the method presented in the previous section. Indeed, if we consider the surface $\cal{V}:\;(p^2+3q^2)^2=X^4+3$  then the method of the previous section fails, since the corresponding quartic $\cal{C}':\;w^2=-3(18U^4-6)$ is not locally solvable at 2 and thus has no rational points. However, for $f(X)=X^4+3$ we have $a_{0}=3, a_{2}=0, a_{2}^2-4a_{0}=-12$ and the above formulas show that the set of rational points on the surface $\cal{V}$ is infinite. (In order to prove this, it is enough to start with the point $P_{1}=(U_{1}/W_{1},V_{1}/W_{1}^2)=(1,2)$ and compute the even multiples.)}
\end{rem}

In light of this remark, and the above formulas, the question arises as to whether $Y$-coordinates of rational points on $C$ may be represented by other forms of type $p^2+bq^2$ with $b\neq -(a_{2}^2-4a_{0})$.
We suppose
\[ \mathcal{V}: \; (p^2+b q^2)^2 = H(X) = X^4 + a_2 X^2 + a_0, \quad a_0(a_2^2-4 a_0) \neq 0, \]
and assume the existence of a (finite) point $(p,q,X)=(p_0,q_0,r_0)$, $r_0\neq 0$, which implies
\[ a_0 = m^2 - r_0^4 - a_2 r_0^2, \qquad m=p_0^2+b q_0^2. \]
Set
\[ p=p_0+T, \; q=q_0 + u T, \; X= r_0 + v T. \]
There results a quartic polynomial in $T$ with constant term $0$. We make the coefficient of $T$ equal to $0$ by
taking
\[ v = \frac{2m(p_0 + b q_0 u)}{r_0 (a_2 + 2r_0^2)}. \]
We now have $C_2(u)T^2+C_3(u)T^3+C_4(u)T^4=0$. The discriminant $C_3(u)^2-4 C_2(u)C_4(u)$ must be square for rational $T$, so gives an equation
\begin{equation}
\label{sexticu}
 -8m r_0^2 (a_2 + 2r_0^2) F_2(u) F_4(u) = \square,
\end{equation}
where $F_2(u)$, $F_4(u)$ are polynomials of degree $2$, $4$ respectively.
If this sextic curve is to have genus at most $1$, then the discriminant with respect to $u$ of $F_2(u)F_4(u)$ must vanish. The factorization is
\begin{align*}
& 2^{56} a_0^2 b^{15} m^{34} r_0^{50} (a_2+2 r_0^2)^{31} (a_2^2-4 a_0)^4 (2 a_0+a_2 r_0^2) \times \\
& (16a_0^3+32a_0^2a_2r_0^2+16a_0^2r_0^4+24a_0a_2^2r_0^4-a_2^4r_0^4+32a_0a_2r_0^6+16a_0r_0^8),
\end{align*}
and so the three following distinct possibilities arise. \\ \\
\noindent
$\bullet$ If the factor $2 a_0+a_2 r_0^2$ vanishes, then
\[ (a_2,a_0) = \left( \frac{2(m^2-r_0^4)}{r_0^2}, -(m^2-r_0^4)\right), \]
and the sextic (\ref{sexticu}) becomes
\[ -2b(-q_0 + p_0 u)^2 L_4(u) = \square, \]
with $L_4(u)$ quartic in $u$:
\begin{align*}
L_4(u)= & (m^4 - p_0^2(m + bq_0^2)r_0^4) - 4b^2p_0q_0^3r_0^4u + 2b(m^4 - (m^2 - 3 b p_0^2q_0^2)r_0^4)u^2+\\
        & -4b^2p_0^3q_0r_0^4 u^3 + b^2(m^4 - b(m + p_0^2)q_0^2r_0^4)u^4.
\end{align*}
The discriminant of $L_4(u)$ is $-2^{8}b^{6}m^{14}r_0^8a_0^3$, so is non-zero.
It follows that there exists a non-constant rational map $\psi_{1}:\;\cal{C}_{1}\rightarrow \cal{V}_{1}$
where $\cal{C}_{1}$ is the genus $1$ curve given by
 \begin{equation}\label{curve1}
\cal{C}_{1}:\; -2b L_4(u) = \square
\end{equation}
and $\cal{V}_{1}$ corresponds to our choice of $a_{0}, a_{2}$.
\begin{exam}
{\rm
Take $b=3$, $(p_0,q_0,r_0)=(1,0,2)$. Then $(a_2,a_0)=(-\frac{15}{2}, 15)$ and we have a mapping $\psi_{1}: \cal{C}_{1} \rightarrow \cal{V}_{1}$ where
\[ \cal{C}_{1}: V^2=2(5+30 u^2-3 u^4), \qquad \cal{V}_1: (p^2+3 q^2)^2 = X^4 -\frac{15}{2} X^2 + 15 \]
given by
\begin{align*}
(p,&q, X) =\\
   &\left( \frac{ (5 + 3u^4 - uV) }{(-1 + u^2)(5 + 3u^2) }, \quad  -\frac{ u(-10 + 2u^2 + uV) }{(-1 + u^2)(5 + 3u^2) }, \quad
 \frac{ 2(5 + 3u^4 - uV)}{(-1 + u^2)(5 + 3u^2)} \right).
\end{align*}
Since $\cal{C}_{1}$ is elliptic of rational rank $2$, with generators $(-1,8)$, $(-3,8)$, it follows that there are infinitely many rational points on the elliptic curve
\[ Y^2 = X^4 - \frac{15}{2} X^2 + 15 \]
whose $Y$-coordinate can be represented by the quadratic form $p^2+3q^2$. We note that the curve $\cal{C}_{1}$ is 2-isogenous to the curve corresponding to $\cal{V}_{1}$.
}
\end{exam}

\noindent
$\bullet$ If the factor $a_2+2r_0^2$ vanishes, then
\[ (a_2,a_0)=(-2 r_0^2, m^2+r_0^4), \]
and we have to eliminate $u$ rather than $v$. There results a quartic equation of the form
\begin{equation}\label{curve2}
\cal{C}_{2}:\; 2b (-m^2+2b q_0^2 r_0^2 v^2 + b^2 q_0^4 v^4) = \square;
\end{equation}
the discriminant of the quartic is $-2^{14}b^{12}m^2q_0^{12}(m^2+r_0^4)^2$, and hence non-zero,
so that $\cal{C}_{2}$ is of genus $1$. Accordingly we have a non-constant rational map $\psi_{2}:\;\cal{C}_{2}\rightarrow \cal{V}_{2}$, where $\cal{V}_{2}$ corresponds to our choice of $a_{0}, a_{2}$.
\begin{exam}
{\rm
Take $b=2$, $(p_0,q_0,r_0)=(1,1,1)$. Then $(a_2,a_0)=(-2,10)$ and we have a mapping $\psi_{2}: \cal{C}_{2} \rightarrow \cal{V}_{2}$ where
\[ \cal{C}_{2}: U^2=-9+4v^2+4v^4, \qquad \cal{V}_{2}: (p^2+2 q^2)^2 = X^4 -2 X^2 + 10 \]
given by
\[ (p,q,X) =  \left( \frac{ 9 + 8v^3 - 4v^4 - 6U }{9 - 4v^4 }, \quad
 \frac{ 9 - 4v^3 - 4v^4 + 3U }{ 9 - 4v^4 },  \quad
 \frac{ 9 + 4v^4 -6Uv }{ 9 - 4v^4 } \right). \]
Since $\cal{C}_{2}$ is of rational rank $1$, with generator $(\frac{3}{2},\frac{9}{2})$, it follows that there are infinitely many rational points on the elliptic curve
\[ Y^2 = X^4 - 2X^2 + 10 \]
whose $Y$-coordinate can be represented by the quadratic form $p^2+2q^2$. Moreover, one can check that the curve $\cal{C}_{2}$ is 2-isogenous to the curve corresponding to $\cal{V}_{2}$.
}
\end{exam}

\noindent
$\bullet$ The last factor in the discriminant defines a quartic curve in $a_0,a_2$ of genus $0$ over $\Q(r_0)$.
A parametrization is given by
\[ (a_2, \; a_0) = \left(\frac{2t(r_0^2+r_0 t+t^2)}{r_0}, \; t^4\right). \]
The condition that $(p_0,q_0,r_0)$ be a solution becomes
\[ (p_0^2+b q_0^2)^2 = (r_0+t)^2 (r_0^2+t^2), \]
so that necessarily $r_0^2+t^2=\square$. Put $t=\frac{r_0 (s^2-1)}{2s}$, where $s \neq 0, \pm 1$. Then
\[ (a_2, \; a_0) = \left(\frac{r_0^2 (s^2-1)(s^4+2s^3+2s^2-2s+1)}{4s^3}, \;\; r_0^4 \frac{(s^2-1)^4}{16s^4}\right), \]
\[ p_0^2+b q_0^2 = \frac{(s^2+1)(s^2+2s-1)}{4s^2} r_0^2. \]
The quartic $H(X)$ now takes the form
\[ \frac{1}{16s^4} (r_0^2 (s+1)^3(s-1) + 4s X^2) (r_0^2 (s+1)(s-1)^3 + 4s^3 X^2); \]
that is,
\[ \mathcal{V}:\;16s^4(p^2+b q^2)^2 = (r_0^2(s+1)^3(s-1)+4s X^2)(r_0^2(s+1)(s-1)^3+4s^3 X^2), \]
where we demand a solution $(p_0,q_0,r_0)$ of
\begin{equation}
\label{sconic}
p_0^2+b q_0^2 = \frac{(s^2+1)(s^2+2s-1)}{4s^2} r_0^2.
\end{equation}
The sextic (\ref{sexticu}) has become
\begin{equation}
\label{sexticured} -b s (-q_0 + p_0 u)^2 g_1(u) g_2(u) = \square,
\end{equation}
where
\begin{align*}
g_1(u) & = ((1-s^2)p_0^2 + 2s b q_0^2) + 2(1-2s-s^2) b p_0 q_0 u + b(2s p_0^2+(1-s^2)b q_0^2)u^2, \\
g_2(u) & = 2(1-s^2)^2 p_0^2+ (1+s^2)^2 b q_0^2 + 2(1-6s^2+s^4) b p_0 q_0 u + b((1+s^2)^2 p_0^2 + 2(1-s^2)^2 b q_0^2)u^2.
\end{align*}
The discriminants of $g_1,g_2$ cannot vanish, and $g_1,g_2$ have no common root.
Accordingly, there is a non-constant rational map
$\psi_{3}:\;\cal{C}_{3}\rightarrow \cal{V}_{3}$, where $\cal{C}_{3}$ is the genus $1$ curve
\begin{equation}\label{scurve}
\cal{C}_{3}:\; -b s g_1(u) g_2(u) = \square
\end{equation}
and $\cal{V}_{3}$ corresponds to our choice of $a_{0}, a_{2}$.
\begin{exam}
{\rm
Take $b=-5$; $s=\frac{1}{2}$. The conic (\ref{sconic}) is $p_0^2-5 q_0^2 = \frac{5}{16} r_0^2$ with point $(p_0,q_0,r_0)=(\frac{5}{8},\frac{1}{8},1)$.
Then $(a_2,a_0)=(-\frac{39}{32}, \; \frac{81}{256})$, and
\[ \cal{V}_{3}: \; (p^2-5 q^2)^2 = X^4 - \frac{39}{32} X^2 + \frac{81}{256}. \]
The associated elliptic quartic at (\ref{scurve}) above is now
\[ \cal{C}_{3}:\;2(-11 - 10 u + 85 u^2) (-13 - 14 u + 107 u^2) = \square, \]
with point at $u=-\frac{1}{3}$, and cubic model
\begin{equation}
\label{ucurve}
y^2 = x^3 + 588x^2 + 36x
\end{equation}
of rank $1$, generator $(36,-900)$.
On replacing $X$ by $\frac{1}{8} X$, it follows that there are infinitely many
rational points on the elliptic curve
\[ Y^2 = X^4 - 78 X^2 + 1296 =  (X^2 - 24)(X^2 - 54) \]
whose $Y$-coordinate can be represented by the quadratic form $p^2-5 q^2$. As in the previous examples, one can check that the curve $\cal{C}_{3}$ is 2-isogenous to the curve corresponding to $\cal{V}_{3}$.
}
\end{exam}
\noindent
\begin{exam}
{\rm
Take $b=-17$, $s=5$.  The conic (\ref{sconic}) is $p_0^2 - 17 q_0^2 = \frac{221}{25} r_0^2$,
with point $(p_0,q_0,r_0)=(-\frac{119}{40}, -\frac{1}{40},1)$. But the $u$-quartic at (\ref{scurve}) is
\[ y^2 = 102(10001 - 4046 u + 71009 u^2)(-239735 + 28322 u + 2388313 u^2) \]
which is locally unsolvable at $17$.  The $X$-quartic here is
\[ Y^2 = H(X) = X^4 + \frac{5496}{125} X^2 + \frac{20736}{625}, \]
which has cubic model $y^2 = x^3 + 12270x^2 + 31212000x$ of rank $1$.
So the $X$-quartic has rank 1, but the $u$-quartic has no points.
The Jacobian of the $u$-quartic {\it is} $2$-isogenous to the $X$-quartic.
}
\end{exam}
Using exactly the same type of reasoning as in the proof of Theorem \ref{firstthm} one can prove that for each $i\in\{1,2,3\}$ the image of the map $\psi_{i}:\;\cal{C}_{i}\rightarrow \cal{V}_{i}$ is not finite provided that $\cal{C}_{i}(\Q)$ is infinite. We omit these details and summarize our results in the following:

\begin{thm}\label{biquartic1}
Let $m=p_0^2+b q_0^2$.
Suppose the set of rational points on the curve
\begin{align*}
\cal{C}_1: \; & -2b ((m^4 - p_0^2(m + bq_0^2)r_0^4) - 4b^2p_0q_0^3r_0^4u + 2b(m^4 - (m^2 - 3 b p_0^2q_0^2)r_0^4)u^2\\
              & -4b^2p_0^3q_0r_0^4 u^3 + b^2(m^4 - b q_0^2(m + p_0^2)r_0^4)u^4) = \square,
\end{align*}
is infinite. Then the set of rational points on the surface
\[ V_1: \; (p^2+b q^2)^2 = X^4 + \left( \frac{2(m^2-r_0^4)}{r_0^2} \right) X^2 - (m^2-r_0^4) \]
is infinite too (with the convention at the head of Section~\ref{sec2}).
\end{thm}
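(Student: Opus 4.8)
The map $\psi_1\colon\cal{C}_1\to\cal{V}_1$ has already been produced in the discussion preceding the theorem, so the plan is only to verify that it does the job, following the proof of Theorem~\ref{firstthm} essentially line for line. Recall the mechanism: from the assumed point $(p_0,q_0,r_0)$ on $\cal{V}_1$ one sets $p=p_0+T$, $q=q_0+uT$, $X=r_0+vT$ with $v=2m(p_0+bq_0u)/(r_0(a_2+2r_0^2))$ chosen to kill the linear term in $T$, factors $T^2$ out of the resulting quartic in $T$, and is left with $C_2(u)+C_3(u)T+C_4(u)T^2=0$; for rational $T$ one needs $C_3(u)^2-4C_2(u)C_4(u)$ to be a square, and after the specialization $2a_0+a_2r_0^2=0$ this quantity equals $-2b(p_0u-q_0)^2L_4(u)$, which is why the relevant curve is $\cal{C}_1\colon V^2=-2bL_4(u)$. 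A rational point $(u,V)$ on $\cal{C}_1$ thus yields a rational $T$, hence rational $(p,q,X)$ that are rational functions of $u$ and $V$ with $V$ occurring only to the first power in the numerators. Under the convention at the head of Section~\ref{sec2}, the theorem asserts that $\pi_X(\psi_1(\cal{C}_1(\Q)))$ is infinite whenever $\cal{C}_1(\Q)$ is.

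I would argue by contradiction. Suppose $\cal{C}_1(\Q)$ is infinite but $S:=\pi_X(\psi_1(\cal{C}_1(\Q)))$ is finite. Away from the finitely many points of $\cal{C}_1(\Q)$ where $\psi_1$ is undefined, each point produces a triple $(p,q,X)$ with $X\in S$, so with $Y:=p^2+bq^2$ lying in the finite set $\{\pm\sqrt{H(X_0)}:X_0\in S\}$, where $H$ is the quartic of $\cal{V}_1$ for $(a_0,a_2)=(-(m^2-r_0^4),\,2(m^2-r_0^4)/r_0^2)$. Fix one such value $Y$. Inserting the explicit expressions for $p$ and $q$ into $p^2+bq^2=Y$, clearing denominators, and using $V^2=-2bL_4(u)$ to remove all even powers of $V$, gives a relation $A(u)+V\,B(u)=0$ with $A,B\in\Q[u]$; eliminating $V$ once more produces a single polynomial equation $P_Y(u)=0$.

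The one substantive step is to check that $P_Y$ is not identically zero, and as in Theorem~\ref{firstthm} this is a finite, machine-checkable computation: one exhibits the leading coefficient of $P_Y$ explicitly in terms of $b,p_0,q_0,r_0$ (just as the coefficient of $U^{12}$ there turned out to be $4b^6p_0^4(-1+2p_0^4)^2m^2$) and verifies it does not vanish under the standing hypotheses $a_0\neq0$, $a_2^2-4a_0\neq0$, $m\neq0$, $r_0\neq0$. Here one also uses that $\op{Disc}(L_4)=-2^8b^6m^{14}r_0^8a_0^3\neq0$, which guarantees that $\cal{C}_1$ is genuinely of genus $1$ and $\psi_1$ non-constant, and which helps exclude degenerate collapses of $P_Y$. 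Granting $P_Y\not\equiv0$, there are finitely many admissible $u$, for each of which $V^2=-2bL_4(u)$ allows at most two values of $V$; summing over the finitely many $Y$, the set of points of $\cal{C}_1(\Q)$ mapping into this finite $Y$-set is finite, contradicting the infinitude of $\cal{C}_1(\Q)$. Hence $S$ is infinite, which is the claim. I expect the non-vanishing of the leading coefficient of $P_Y$ — confirmed with Magma, as elsewhere in the paper — together with checking that the chosen specialization of $(a_0,a_2)$ forces no accidental degeneracy, to be the only genuinely delicate point; everything else is formal and mirrors the proof of Theorem~\ref{firstthm}.
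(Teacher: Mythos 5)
Your proposal is correct and follows the same route the paper intends: the authors explicitly state that Theorem~\ref{biquartic1} is proved "using exactly the same type of reasoning as in the proof of Theorem~\ref{firstthm}" and omit the details, and your argument reproduces exactly that reasoning — fix a value $Y=p^2+bq^2$, eliminate $V$ to get a polynomial $P_Y(u)$, and verify by an explicit (machine) computation of its leading coefficient that $P_Y\not\equiv 0$, so each $Y$ has finitely many preimages. You have correctly isolated the non-vanishing of $P_Y$ as the only substantive point, which is precisely the step the paper checks explicitly in Theorem~\ref{firstthm} and delegates to omitted computation here.
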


\begin{thm}\label{biquartic2}
Let $m=p_0^2+b q_0^2$.
Suppose the set of rational points on the curve
\[ \cal{C}_{2}:\; 2b (-m^2+2b q_0^2 r_0^2 v^2 + b^2 q_0^4 v^4) = \square, \]
is infinite. Then the set of rational points on the surface
\[ V_2: \; (p^2+b q^2)^2 = X^4 -2 r_0^2  X^2 + (m^2+r_0^4) \]
is infinite too (with the convention at the head of Section~\ref{sec2}).
\end{thm}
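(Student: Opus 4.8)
The plan is to follow the template established in the proof of Theorem~\ref{firstthm}, adapted to the map $\psi_2$ coming from the factor $a_2+2r_0^2=0$. First I would recall the explicit construction leading to $\cal{C}_2$: starting from the surface $\mathcal{V}_2:\;(p^2+bq^2)^2=X^4-2r_0^2X^2+(m^2+r_0^4)$ with its given finite point $(p_0,q_0,r_0)$, we set $p=p_0+T$, $q=q_0+uT$, $X=r_0+vT$, kill the linear coefficient in $T$ by the prescribed choice of $v$, and are left with $C_2(u)T^2+C_3(u)T^3+C_4(u)T^4=0$; for a rational $T\neq 0$ to exist we need the discriminant $C_3(u)^2-4C_2(u)C_4(u)$ to be a square, and in the case $a_2=-2r_0^2$ this simplifies (after clearing square factors) to the quartic condition defining $\cal{C}_2$. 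Tracing back the substitutions gives an explicit non-constant rational map $\psi_2:\cal{C}_2\rightarrow\mathcal{V}_2$, exactly as $\psi$ was produced before.

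Next, assuming $\cal{C}_2(\Q)$ is infinite, I would argue that $\pi_X(\psi_2(\cal{C}_2(\Q)))$ is infinite. By the convention at the head of Section~\ref{sec2}, it suffices to show that $\psi_2$ does not collapse its infinitely many rational source points onto a finite set of $X$-values; equivalently, that for each fixed value $Y_0$ of $p^2+bq^2$ there are only finitely many $(v,U)\in\cal{C}_2(\Q)$ whose image $(p,q,X)$ satisfies $p^2+bq^2=Y_0$. The mechanism is identical to Theorem~\ref{firstthm}: eliminate the square-root variable $U$ between the quartic equation of $\cal{C}_2$ and the equation $p(v,U)^2+bq(v,U)^2=Y_0$ (both $p$ and $q$ being rational functions of $v$ and $U$ obtained from the back-substitution), obtaining a single polynomial equation in $v$. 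Provided this polynomial is not identically zero, it has only finitely many roots $v$, and each $v$ gives finitely many $U$, so only finitely many source points map to a given fibre; since $\cal{C}_2(\Q)$ is infinite, infinitely many distinct $X$-values are attained.

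The crux, therefore, is verifying that the eliminated polynomial in $v$ is not identically zero. In Theorem~\ref{firstthm} this was done by exhibiting the leading coefficient explicitly and noting it is a nonzero rational (a product of nonzero factors given the standing hypotheses $a_0(a_2^2-4a_0)\neq0$ and $r_0\neq0$, hence $m\neq0$, $q_0\neq0$ on the relevant locus). I would do the same here: carry out the elimination symbolically (this is a routine resultant computation, best left to Magma as the authors do throughout), identify the coefficient of the top-degree term in $v$, and check it factors into quantities that are manifestly nonzero under our hypotheses — in particular $b\neq0$, $q_0\neq0$ (otherwise the form $p^2+bq^2$ would be reducible or $\cal{C}_2$ degenerate), and $m^2+r_0^4=a_0\neq0$. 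This is precisely the step the authors signal they are omitting when they write ``Using exactly the same type of reasoning as in the proof of Theorem~\ref{firstthm}''; the only real content is confirming the non-vanishing of that leading coefficient, everything else being formal. The main obstacle is thus purely computational bookkeeping rather than conceptual: ensuring no hidden cancellation forces the elimination to be trivial, which the non-vanishing of the quartic discriminant $-2^{14}b^{12}m^2q_0^{12}(m^2+r_0^4)^2$ already strongly suggests cannot happen.
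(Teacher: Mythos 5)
Your proposal follows exactly the paper's (omitted) argument: construct $\psi_2$ as in the case $a_2+2r_0^2=0$, then show that each fibre of $p^2+bq^2$ meets only finitely many points of $\mathcal{C}_2(\Q)$ by eliminating the square-root variable and checking that the resulting polynomial in $v$ is not identically zero via its leading coefficient, precisely the reasoning of Theorem~\ref{firstthm} that the authors say they are reusing. The only slight imprecision is that when $a_2+2r_0^2=0$ the linear coefficient of $T$ is killed by solving for $u$ rather than $v$ (the prescribed formula for $v$ has vanishing denominator), which is why $\mathcal{C}_2$ is a quartic in $v$; this does not affect your argument.
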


\begin{thm}\label{biquartic3}
Let $p_0^2+b q_0^2 = \frac{(s^2+1)(s^2+2s-1)}{4s^2} r_0^2$.
Suppose the set of rational points on the curve $\cal{C}_{3}$ at (\ref{scurve}) is infinite. Then the set of rational points on the surface
\[ V_3: \; (p^2+b q^2)^2 = X^4 + \frac{r_0^2(s^2-1)(s^4+2s^3+2s^2-2s+1)}{4s^3}X^2 + r_0^4 \frac{(s^2-1)^4}{16s^4} \]
is infinite too (with the convention at the head of Section~\ref{sec2}).
\end{thm}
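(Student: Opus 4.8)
The plan is to mimic verbatim the argument already given for Theorem~\ref{firstthm}, now applied to the map $\psi_3 : \cal{C}_3 \to \cal{V}_3$ constructed above. The only genuinely new content is the verification that $\psi_3$ has finite fibres over the projection to the $X$-coordinate; everything else is bookkeeping. Concretely, assume $\cal{C}_3(\Q)$ is infinite. Since $\cal{C}_3$ is a curve of genus $1$ (we checked that $g_1,g_2$ have nonvanishing discriminants and no common root, so $g_1 g_2$ is a separable quartic), its set of rational points being infinite means it is an elliptic curve of positive rank. To conclude that $\pi_X(\cal{V}_3(\Q))$ is infinite it suffices to show that $\psi_3$ does not collapse the infinitely many points of $\cal{C}_3(\Q)$ onto finitely many values of $X$.

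For this I would argue exactly as in the proof of Theorem~\ref{firstthm}. Suppose for contradiction that $\psi_3$ maps $\cal{C}_3(\Q)$ into a set of points $(p',q',X')\in\cal{V}_3(\Q)$ with only finitely many distinct values $Y = p'^2 + b q'^2$ (equivalently, only finitely many $X'$, since $(p^2+bq^2)^2$ determines $H(X')$ and $H$ has finite fibres). Fixing one such value $Y$, I eliminate the square-root variable between $Y = p(u)^2 + b\,q(u)^2$ — where $p(u), q(u)$ are the rational functions of $u$ coming from unwinding the substitution $p = p_0 + T$, $q = q_0 + uT$, $X = r_0 + vT$ together with $T = (-C_3 + \sqrt{\cal C_3})/(2C_2)$ — and the defining equation $w^2 = -bs\,g_1(u)g_2(u)$ of $\cal{C}_3$. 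Clearing denominators yields a polynomial identity in $u$ of bounded degree. As in the first proof, one shows this polynomial is not identically zero by exhibiting one nonvanishing coefficient (the leading coefficient, which is a product of the nonzero quantities $b$, $s$, $p_0$, $m$, and powers of the separable-quartic data, under the standing assumption $p_0 q_0 \neq 0$ and $a_0(a_2^2 - 4a_0)\neq 0$); hence for each of the finitely many values of $Y$ only finitely many $u$ can occur, so $\cal{C}_3(\Q)$ would be finite, a contradiction. Therefore $\pi_X(\cal{V}_3(\Q))$ is infinite.

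The main obstacle is purely computational rather than conceptual: one must produce the rational functions $p(u), q(u), X(u)$ explicitly (they are displayed for $\psi_3$ in the example with $b=-5$, $s=\tfrac12$, and in general have denominators involving $g_1(u)$ or $(2sp_0^2 + (1-s^2)bq_0^2)u^2 + \ldots$), then verify that after elimination the resulting polynomial in $u$ has a provably nonzero coefficient for generic $(p_0,q_0,r_0,s,b)$ subject to \eqref{sconic}. This is a finite check that can be — and in the paper's spirit, is — delegated to Magma; it is exactly parallel to the identification of the coefficient $4b^6 p_0^4(-1+2p_0^4)^2(p_0^2+bq_0^2)^2$ of $U^{12}$ in the proof of Theorem~\ref{firstthm}. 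Since the authors explicitly say ``Using exactly the same type of reasoning as in the proof of Theorem~\ref{firstthm} one can prove that \ldots the image of the map $\psi_i$ is not finite provided that $\cal{C}_i(\Q)$ is infinite. We omit these details,'' the proof is precisely this reduction, and the three theorems \ref{biquartic1}, \ref{biquartic2}, \ref{biquartic3} then follow at once from the corresponding constructions of $\psi_1, \psi_2, \psi_3$ together with the genus and discriminant computations already carried out in this section.
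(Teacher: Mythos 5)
Your proposal matches the paper's intended argument exactly: the authors themselves state that Theorems \ref{biquartic1}--\ref{biquartic3} follow by ``exactly the same type of reasoning as in the proof of Theorem \ref{firstthm}'' (eliminate $w$, obtain a not-identically-zero polynomial in $u$ for each fixed value of $p^2+bq^2$, conclude the fibres are finite) and omit the computational details, just as you do. Your reduction is correct and is essentially the same approach as the paper.
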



\section{A remark on inhomogeneous representation}\label{sec4}

The ideas of Section~\ref{sec2} and Section~\ref{sec3} can be used to investigate representing the $Y$-coordinate of points on the curve
$\cal{C}:\;Y^2=A_4X^4+...+A_0:=f(X)$ by the inhomogeneous quadratic form $ax^2+by^2+c$, $a,b,c\in\Z$, $ab \neq 0$.
Consider the quartic surface
$$
\cal{V}:\;(ap^2+bq^2+c)^2=f(X),
$$
where we assume $\op{Disc}_{X}(f) \neq 0$.
As before, on multiplying through by $a^2$, we may suppose without loss of generality that $a=1$; and again when we refer to $\cal{V}(\Q)$ being infinite, we mean that $\pi_X(\cal{V}(\Q))$ is infinite, where $\pi_X: \cal{V}(\Q) \rightarrow \Q$ is the projection onto the $X$-coordinate.

Suppose that $(p_{0}, q_{0}, X_{0})$ lies on $\cal{V}$ with $f(X_{0}) \neq 0$. Without loss
of generality we may assume that $X_{0}=0$ and $p_{0}q_{0}\neq 0$. In particular
$A_{0}=(p_{0}^2+bq_{0}^2+c)^2\neq 0$. To shorten notation, put $m=p_0^2+bq_0^2+c$ so that $\cal{V}$ takes the form
$$
\cal{V}:\;(p^2+bq^2+c)^2=A_{4}X^4+A_{3}X^3+A_{2}X^2+A_{1}X+m^2.
$$
To find further points on $\cal{V}$, put
\begin{equation}\label{inhsub}
X=T,\quad p=p_{0}+\frac{U+p_{0}A_{1}}{4(m-c)m}T,\quad q=q_{0}-\frac{p_{0}U-bq_{0}^2A_{1}}{4bq_{0}(m-c)m}T,
\end{equation}
where $U, T$ are to be determined. This substitution gives
$$
(p^2+bq^2+c)^2-f(X)=\frac{T^2}{256b^2q_{0}^4(m-c)^2m^4}F(T),
$$
where $F(T)=B_{0}+B_{1}T+B_{2}T^2$ and $B_{i}\in\Z[U]$ with $\op{deg}_{U}B_{0}=\op{deg}_{U}B_{1}=2$
and $\op{deg}_{U}B_{2}=4$. Thus the point $(p,q,X)$ with coordinates given by (\ref{inhsub}) lies
on $\cal{V}$ if and only the equation $F(T)=0$ has rational roots, which is equivalent to the
discriminant $\op{Disc}_T(F)$ being a square. This corresponds to considering the curve in the $(U,V)$
plane given by the equation
$$
\cal{C}:\;V^2=B_{1}^2(U)-4B_{0}(U)B_{2}(U)=-128bq_{0}^2(m-c)m^3G(U).
$$
Here $G$ is monic of degree 6 (and does not contain any odd power of $U$).
In consequence, if $G$ has no multiple roots then $\cal{G}$ has only finitely
many rational points. We thus seek $\op{Disc}_{U}(G)=0$. We have that
\begin{equation*}
\op{Disc}_{U}(G)=-2^{42} b^{15} q_0^{30} (m-c)^{12} m^{12} (\op{Disc}_{X}(f))^2 H,
\end{equation*}
where
\begin{align*}
H=&\;-8(m-c)m(A_{1}^4-256A_{4}(m-c)^2m^4)A_{2}+A_{1}^6-64(m-c)^2m^2A_{1}^3A_{3}+\\
  &\;256(2c-3m)(m-c)^2m^3A_{4}A_{1}^2-512(m-c)^3m^5A_{3}^2,
\end{align*}
and $\op{Disc}_{U}(G) = 0$ if and only if $H=0$.\\ \\
We consider two cases: (I) $A_{1}^4-256 A_{4}(m-c)^2 m^4=0$; and (II) $A_{1}^4-256A_{4}(m-c)^2 m^4\neq 0$. \\ \\
{\bf Case I:} $A_{1}^4-256A_{4}(m-c)^2 m^4=0$.\\
Then $A_{4}=\frac{A_{1}^4}{256(m-c)^2m^4}$ and $mH=-2(m-c)(A_{1}^3 - 16m^3(m-c)A_{3})^2$, whence
$A_{3}=\frac{A_{1}^{3}}{16(m-c)m^3}$.
Since we are interested only in polynomials $f$ with $\op{deg}_{X}f\in\{3,4\}$, then necessarily $A_{1}\neq 0$.
Summing up: after some simplification, if
$$
f_{1}(X)=\frac{A_{1}^4}{256(m-c)^2m^4}X^4 + \frac{A_{1}^{3}}{16(m-c)m^3}X^3+A_{2}X^2+A_{1}X+m^2
$$
then there exists a non-constant rational map $\phi_{1}:\;\cal{C}_{1}\rightarrow \cal{V}_{1}$, where (on replacing $U$ by $q_0 U$)
\begin{align}\label{curvec1}
\cal{C}_{1}: \; V^2=-&2b(m-c)m \; \times\\
                   &\left( m U^4+b m (3A_{1}^2 - 8m A_{2}(m-c))U^2 -2b^2 A_1^2((2c-3m)A_{1}^2+8(m-c)m^2A_{2}) \right)\notag
\end{align}
and $\cal{V}_{1}:\;(p^2+bq^2+c)^2=f_{1}(X)$. \\ \\
{\bf Case II}: $A_{1}^4-256A_{4}(m-c)^2m^4\neq 0$. \\
We solve the equation $H=0$ with respect to $A_{2}$ and get
\begin{equation}\label{A2}
A_{2}=\frac{A_{1}^6+64(m-c)^2m^2A_{1}^3A_{3}+256(2c-3m)(m-c)^2m^3A_{1}^2A_{4}-512(m-c)^3m^5A_{3}^2}{8m(m-c)(A_{1}^4-256A_{4}(m-c)^2m^4)}.
\end{equation}
Summing up: after some simplification, if
$$
f_{2}(X)=A_{4}X^4+A_{3}X^3+A_{2}X^2+A_{1}X+m^2,
$$
where $A_{2}$ is given by (\ref{A2}), then there exists a non-constant rational map $\phi_{2}:\;\cal{C}_{2}\rightarrow \cal{V}_{2}$, where (on replacing $U$ by $q_0 U$),
\begin{equation}
\label{curvec2}
\cal{C}_{2}:\;V^2=2bm(m-c)(256A_{4}(m-c)^2m^4-A_{1}^4)(C_{4}U^4+C_{2}U^2+C_{0}),
\end{equation}
with
\begin{align*}
C_{4}=& A_{1}^4-256(m-c)^2m^4A_{4},\\
C_{2}=&2b(A_{1}^6-32(m-c)^2m^2(A_{3}A_{1}^3+ 8cmA_{4}A_{1}^2-8(m-c)m^3A_{3}^2)),\\
C_0=& b^2(A_1^8 -64(m-c)^2 m^2 A_1^5A_3 + 512(m-c)^2 m^3 (m-2c) A_1^4 A_4+\\
    & +1024(m-c)^3 m^5 A_1^2 A_3^2 -16384(m-c)^4 m^6 A_4 (A_1 A_3-4 A_4 m^2) )
\end{align*}
and $\cal{V}_{2}:\;(p^2+bq^2+c)^2=f_{2}(X)$.\\ \\
We can summarize our result in the following:

\begin{thm}
Let $m=p_0^2+b q_0^2+c$.  Suppose the set of rational points on the curve $\cal{C}_{1}$ at (\ref{curvec1})
is infinite. Then the set of rational points on the surface
\[ \cal{V}_1: \; (p^2+b q^2 +c)^2 = \frac{A_1^4}{256(m-c)^2m^4} X^4 +\frac{A_1^3}{16(m-c)m^3} X^3 + A_2 X^2 + A_1 X + m^2 \]
is infinite too (with the convention at the head of this section).
\end{thm}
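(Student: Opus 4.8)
The plan is to imitate the proof of Theorem~\ref{firstthm}. In Case~I we have produced a non-constant rational map $\phi_1: \cal{C}_1 \to \cal{V}_1$, given explicitly by the substitution (\ref{inhsub}) together with the solution $T=(-B_1+V')/(2B_2)$ of the quadratic $F(T)=0$, where $V'$ is the square root recorded by the equation (\ref{curvec1}) of $\cal{C}_1$; thus the coordinates $p,q,X$ of a point of $\cal{V}_1$ are explicit rational functions of the coordinates $(U,V)$ of a point of $\cal{C}_1$, and these rational functions share a common denominator that is a polynomial in $U$ alone. Assume $\cal{C}_1(\Q)$ is infinite. In view of the convention at the head of the section, it suffices to prove that $\pi_X\bigl(\phi_1(\cal{C}_1(\Q))\bigr)$ is infinite, since this is a subset of $\pi_X(\cal{V}_1(\Q))$.

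First I would replace the $X$-coordinate by the quantity $Y:=p^2+bq^2+c$. On $\cal{V}_1$ one has $Y^2=f_1(X)$, and $f_1$ has degree exactly $4$ because $A_1\neq0$ forces $A_4=A_1^4/(256(m-c)^2m^4)\neq0$; hence a given value of $X$ yields at most two values of $Y$, and a given value of $Y$ at most four values of $X$. Consequently $\pi_X\bigl(\phi_1(\cal{C}_1(\Q))\bigr)$ is infinite if and only if the function $Y=p^2+bq^2+c$ takes infinitely many values on $\cal{C}_1(\Q)$. Partitioning $\cal{C}_1(\Q)$ according to the value of $Y$, it therefore suffices to prove that for every fixed $y\in\Q$ the set $S_y:=\{(U,V)\in\cal{C}_1(\Q): p^2+bq^2+c=y\}$ is finite.

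To establish this, fix $y\in\Q$, substitute the rational expressions for $p$ and $q$ into $p^2+bq^2+c=y$, clear the common (polynomial-in-$U$) denominator, and write the resulting identity in the shape $N_0(U)+N_1(U)\,V=y\,D(U)$ with $N_0,N_1,D\in\Q[U]$. Squaring and substituting for $V^2$ the quartic in $U$ on the right of (\ref{curvec1}) eliminates $V$ and produces a polynomial equation $P_y(U)=0$. Mirroring Theorem~\ref{firstthm}, where the leading coefficient of the analogous degree-$12$ polynomial was found to equal $4b^6p_0^4(2p_0^4-1)^2(p_0^2+bq_0^2)^2$, I expect the leading coefficient of $P_y$ to be \emph{independent of $y$} and to be a product of powers of the nonzero quantities $b$, $p_0$, $q_0$, $m$, $m-c=p_0^2+bq_0^2$ (times a perfect square); in particular it does not vanish under the hypotheses in force ($b\neq0$, $p_0q_0\neq0$, $p_0^2+bq_0^2\neq0$). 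Then $P_y$ is not the zero polynomial, so only finitely many $U$ are admissible, whence $S_y$ is finite apart from the finitely many points of $\cal{C}_1$ at which a denominator of $\phi_1$ vanishes; the result follows.

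The main obstacle is exactly this elimination step: one must compute the resultant, simplify the degree-$\approx12$ polynomial $P_y(U)$, and verify that its leading coefficient is the claimed nonvanishing expression; this is routine but heavy and is best carried out with Magma. A minor point is that one should also check that $\phi_1$ is genuinely non-constant and that its denominators do not vanish identically on $\cal{C}_1$; but the finiteness of every $S_y$ already forces non-constancy, since a constant value of $Y$ would make some $S_y$ equal to the whole infinite set $\cal{C}_1(\Q)$. One could alternatively invoke an isogeny argument as in the Remark following Theorem~\ref{firstthm}, but the direct computation above is the natural analogue of the earlier proof.
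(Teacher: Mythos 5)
Your proposal takes essentially the same route as the paper: the theorem's justification there is precisely the non-constant map $\phi_{1}:\cal{C}_{1}\rightarrow\cal{V}_{1}$ constructed in Case I together with an (omitted, as for Theorems~\ref{biquartic1}--\ref{biquartic3}) repetition of the fiber-finiteness argument of Theorem~\ref{firstthm}, which is exactly the elimination of $V$ and nonvanishing-leading-coefficient check you describe. The only caveat is that, like the paper, you defer rather than carry out the machine verification that the leading coefficient of $P_y(U)$ is a nonzero quantity, so your write-up is no less complete than the original.
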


\begin{thm}
Let $m=p_0^2+b q_0^2+c$.  Suppose the set of rational points on the curve $\cal{C}_{2}$ at (\ref{curvec2}) is infinite. Then the set of rational points on the surface
\[ \cal{V}_{2}: \; (p^2+b q^2+c)^2 = A_4 X^4+A_3 X^3+A_2 X^2+A_1 X+m^2,  \]
where $A_2$ is given by (\ref{A2}),
is infinite too (with the convention at the head of this section).
\end{thm}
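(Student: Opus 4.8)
The plan is to argue exactly as in the proof of Theorem~\ref{firstthm}. Assume $\cal{C}_2(\Q)$ is infinite; by the convention of this section it is enough to show that the $X$-coordinates of the points in the image of the (non-constant) rational map $\phi_2:\cal{C}_2\to\cal{V}_2$ form an infinite subset of $\Q$. Suppose instead that this set of $X$-coordinates is finite, say $\{X_1,\dots,X_k\}$. Since every point of $\cal{V}_2$ satisfies $(p^2+bq^2+c)^2=f_2(X)$, the image points then have $Y$-coordinate $Y=p^2+bq^2+c$ lying in the finite set $\{\pm\sqrt{f_2(X_j)}\}$. Hence $\cal{C}_2(\Q)$ is the union, over these finitely many values $Y$, of the sets $\{(U,V)\in\cal{C}_2(\Q):p^2+bq^2+c=Y\}$, where $p,q$ are the coordinate functions of $\phi_2$. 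It therefore suffices to prove that each such set is finite; since $\cal{C}_2(\Q)$ is their union, this forces $\cal{C}_2(\Q)$ to be finite, a contradiction.

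To do this I would first write $\phi_2$ out explicitly. Recall that a point of shape (\ref{inhsub}) with $X=T$ lies on $\cal{V}_2$ precisely when $F(T)=B_0(U)+B_1(U)T+B_2(U)T^2=0$; the curve $\cal{C}_2$ records $U$ together with a square root $V$ of a fixed constant times $\op{Disc}_T(F)=B_1^2-4B_0B_2$, and then $T$ is recovered from $U$ and $V$ as a rational function which is linear in $V$ over a denominator depending only on $U$. Consequently $\phi_2(U,V)=(p,q,X)$ with $X=T(U,V)$ and $p,q$ as in (\ref{inhsub}); in particular $p$ and $q$ are polynomial in $U$ and linear in $X$, hence linear in $V$ over a denominator depending only on $U$.

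The heart of the matter is an elimination. Substituting the expressions for $p,q$ into $p^2+bq^2+c=Y$ and clearing denominators produces a polynomial relation $P(U,V)=0$, quadratic in $V$; reducing modulo the defining equation
\[
V^2=2bm(m-c)\bigl(256A_4(m-c)^2m^4-A_1^4\bigr)\bigl(C_4U^4+C_2U^2+C_0\bigr)
\]
of $\cal{C}_2$ rewrites it as $P_0(U)+V\,P_1(U)=0$, whence, squaring,
\[
P_0(U)^2-2bm(m-c)\bigl(256A_4(m-c)^2m^4-A_1^4\bigr)\bigl(C_4U^4+C_2U^2+C_0\bigr)\,P_1(U)^2=0,
\]
a polynomial identity in $U$ alone (if $P_1\equiv0$ one is left with $P_0(U)=0$). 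Provided this polynomial is not identically zero, $U$ ranges over a finite set; since for each such $U$ there are at most two $V$ with $(U,V)\in\cal{C}_2$, the set above is finite, which completes the proof. The non-vanishing is checked as in Theorem~\ref{firstthm}: one exhibits the leading coefficient of this polynomial in $U$ and verifies it is a nonzero expression in $b,c,p_0,q_0,A_1,A_3,A_4$, using that we are in Case~II (so $256A_4(m-c)^2m^4-A_1^4\neq0$), that $\op{Disc}_X(f_2)\neq0$, and the running hypotheses ($b,p_0,q_0\neq0$, $m\neq0$, $m\neq c$).

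I expect the last step to be the only real obstacle. Because $A_2$, given by (\ref{A2}), and the coefficients $C_0,C_2,C_4$ are already polynomials of substantial degree in $A_1,A_3,A_4$, the eliminant is an unwieldy polynomial in $U$, and its leading coefficient can realistically only be computed and factored with a computer algebra system (Magma), much as the explicit coefficient $4b^6p_0^4(-1+2p_0^4)^2(p_0^2+bq_0^2)^2$ in the proof of Theorem~\ref{firstthm} was obtained by machine. A minor extra point is that this leading coefficient may degenerate for the exceptional value $Y=m$ — the $Y$-coordinate of the base point $(p_0,q_0,0)$, which $\phi_2$ approaches at infinity — and there one simply passes to the next nonzero coefficient, the polynomial still being non-trivial. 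Conceptually nothing new is needed beyond Theorem~\ref{firstthm}: the non-degeneracy hypotheses guarantee a genuine top-degree term, all fibres are finite, and the theorem follows.
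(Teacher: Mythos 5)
Your proposal is correct and follows essentially the same route as the paper: the paper gives no separate proof for this theorem but relies on the construction of $\phi_2$ together with the fiber-finiteness argument of Theorem~\ref{firstthm} (fix the value $Y$ of the quadratic form, eliminate the square-root variable $V$ against the defining equation of $\cal{C}_2$, and check the resulting polynomial in $U$ is not identically zero via a machine-computed coefficient), which is exactly what you describe. Your extra care in passing from finitely many $X$-values to finitely many $Y$-values, and in flagging the possible degeneration at $Y=m$, is a mild refinement rather than a different method.
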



\begin{exam}
{\rm
Take $(b,c,p_0,q_0)=(1,1,1,1)$, $(A_1,A_2)=(2,2)$. Then $\phi_1: \cal{C}_1 \rightarrow \cal{V}_1$ where
\[ \cal{C}_{1}: V^2=130+63 U^2-3 U^4, \quad \cal{V}_{1}: (p^2+q^2+1)^2 = 9+24X+288X^2+16X^3+4X^4 \]
given by
\begin{align*}
(p,& q, X) = \\
   &\left( \frac{ -2U^2+U^3 -(1+U)V }{ U(2 + U^2) }, \quad \frac{ 2U^2+U^3 -(1-U)V }{U(2 + U^2) }, \quad \frac{ -(2U+V) }{ U(2 + U^2) } \right).
\end{align*}
Now $\cal{C}_{1}$ has rational rank $2$, with independent points
$$
\left( \frac{9}{2},\frac{53}{4} \right),\;\left( \frac{152129}{152882}, \frac{321697804123}{152882^2} \right),
$$ and it follows that the curve $Y^2= 9+24X+288X^2+16X^3+4X^4$ has infinitely many points with $Y$-coordinate represented by the form $p^2+q^2+1$. We note that the curve $\cal{C}_{1}$ is 2-isogenous with the curve related to $\cal{V}_{1}$.
}
\end{exam}

\begin{rem}\label{genmethod}
{\rm
Essentially the same method as presented in the last three sections can be applied to curves defined by more general equations of the form
$$
\cal{C}:\;Y^2+f_{2}(X)Y=f_{4}(X),
$$
where $f_{i}\in\Z[X]$ and $\op{deg}f_{i}=i$ for $i=2,4$. However, in order to avoid long computations we have presented our approach only for those $\cal{C}$ with $f_{2}(X)\equiv 0$.
}
\end{rem}

\section{The Diophantine equation $Y^2=f(ap^2+bq^2)$ with $f$ cubic}\label{sec5}
Motivated by the results above, we investigate whether any variation
of the method can be used in other situations. Specifically, we consider
the question of when the $X$-coordinates of points on an elliptic curve of type $Y^2=f(X)$, $f$ cubic,
can be represented by the quadratic form $a x_1^2+b x_2^2$.
First, observe that the curve
$$
E_{1}:\;y^2=x^3+Ax^2+Bx,
$$
with $A, B\in\Q$, $A^2-4B\neq 0$, is 2-isogenous to the curve
$$
Y^2 = X^3-2A X^2+(A^2-4B)X
$$
under $(X,Y)=(\frac{y^2}{x^2}, \; \frac{y(B-x^2)}{x^2})$.
Accordingly, $E_1$ is 2-isogenous to the curve
$$
E_2:\;Y^2=\left(\frac{X-b}{a}\right)^3-2A\left(\frac{X-b}{a}\right)^2+(A^2-4B)\left(\frac{X-b}{a}\right)
$$
with isogeny given by
$$
\phi:\;E_{1}\ni (x,y)\mapsto \left(a\left(\frac{y}{x}\right)^2+b,\quad \frac{y(B-x^2)}{x^2}\right)\in E_{2}.
$$
Thus if $E_1$ has positive rank, then $E_2$ has infinitely many rational points $(X,Y)$ with $X$ represented by the form $ap^2+bq^2$.
\\ \\
Consider the surface
\begin{equation*}
\cal{W}:\;Y^2=f(ap^2+bq^2),
\end{equation*}
where $f(X)=c_{3}X^3+c_{2}X^2+c_{1}X+c_{0} \in \Z[X]$ is non-singular. Without loss of generality, on multiplying by $c_3^2$, we may take $c_{3}=1$.
As in the case of the surface $\cal{V}$ considered in Section~\ref{sec2}, when we say that the set
$\cal{W}(\Q)$ is infinite, we mean that the set $\pi_{Y}(\cal{W}(\Q))$ is infinite, where the map
$\pi_{Y}:\;\cal{W}(\Q)\ni (p,q,Y)\mapsto Y\in\Q$ is simply the projection onto the $Y$-line. \\ \\
We assume that $(p_0, q_{0}, Y_{0})$ is a rational point on $\cal{W}$ with $Y_{0}\neq 0$.
Now
$$
f(0)=c_{0}=Y_0^2-(m^3+c_{2}m^2+c_{1}m), \qquad m=a p_0^2+b q_0^2.
$$
Computing the discriminant of $f$ with this value of $c_{0}$,
\begin{align}\label{discf}
\op{Disc}_{X}(f)= &\; -27Y_0^4-2(c_2 + 3m)(2c_2^2-9c_1-6c_2m-9m^2)Y_0^2 \\
                  &\; -(c_1+2c_2m+3m^2)^2(-c_2^2+4c_1+2c_2m+3m^2) \nonumber
\end{align}

We will present a method which sometimes allows us to prove that the set $\cal{W}(\Q)$ is infinite.
In order to find more points on $\cal{W}$ set
\begin{equation}
\label{pqY}
p=T+p_{0},\quad q=UT+q_{0}, \quad Y=VT^3+rT^2+sT+Y_{0},
\end{equation}
and demand that $(p,q,Y)$ lie on the surface $\cal{W}$. This gives
\begin{equation*}
Y^2-f(ap^2+bq^2)=\sum_{i=1}^{6}C_{i}T^{i},
\end{equation*}
where $C_{i}$ for $i=1,\ldots, 6$, is a polynomial in $\Q[U,V,r,s]$ with coefficients depending on
$p_{0}, q_{0}, Y_{0}$. The system $C_{1}=C_{2}=C_{3}=0$ is linear in $V, r, s$ and has exactly one
solution, of the form
\begin{equation}\label{qrs:substitution}
V=\frac{Q(U)}{2Y_{0}^5},\quad r=\frac{R(U)}{2Y_{0}^{3}},\quad s=\frac{S(U)}{Y_{0}},
\end{equation}
where $Q, R, S$ are polynomials in $U$ with coefficients depending on $p_{0}, q_{0}, Y_{0}$.
Moreover, $\op{deg}P=3,\;\op{deg}Q=2$ and $\op{deg}S=1$.
Substituting from (\ref{pqY}) into the polynomial $\sum_{i=1}^{6}C_{i}T^{i}$, using $V,r,s$ from
(\ref{qrs:substitution}), there results
\begin{equation}\label{aftersubstitution}
H_{U}(T):=\frac{A_{4}}{4Y_{0}^{6}}+\frac{A_{5}}{2Y_{0}^{8}}T+\frac{A_{6}}{4Y_{0}^{10}}T^2=0,
\end{equation}
It is clear that a necessary condition for solvability of (\ref{aftersubstitution}) for some fixed $U\in\Q$ is that the discriminant, say $\Delta_{U}(H)$, of the polynomial $H_{U}$ should be a square in $\Q$. We have that $\Delta_{U}(H)$ is a polynomial in $U$ of degree 10. More precisely,
\begin{equation*}
(2Y_{0}^{8})^{2}\Delta_{U}(H)=A_{5}^{2}-A_{4}A_{6}=4H_{1}(U)H_{2}(U),
\end{equation*}
where $\op{deg}H_{1}=4$ and $\op{deg}H_{2}=6$.
We consider the curve
\begin{equation}\label{Discriminantcurve}
\cal{C}:\;W^2=H_{1}(U)H_{2}(U).
\end{equation}

\noindent
For general choice of $c_{1}, c_{2}, Y_{0}, p_{0}$ and $q_{0}$ the curve $\cal{C}$ is a hyperelliptic curve of genus 4. Due to Faltings Theorem we know that a necessary condition for $\cal{C}$ to have infinitely many rational points is that $\op{genus}(\cal{C})\leq 1$. This immediately implies that the discriminant of the polynomial $H_{1}(U)H_{2}(U)$ with respect to $U$ must be 0. In fact we need to have an equality of the following form: $H_{1}(U)H_{2}(U)=g(U)h(U)^2$, where the degree of $g$ is at most 4.

Since $\op{Disc}_{U}(H_{1}H_{2})=\op{Disc}_{U}(H_{1})\op{Disc}_{U}(H_{2})\op{Res}_{U}(H_{1},H_{2})^{2}$, we compute independently the discriminants of $H_{1}$, $H_{2}$, and the resultant of these two polynomials. To shorten the notation, introduce new quantities:
$$
y_{0}=Y_{0}^2,\quad Z=3m^2+2c_{2}m+c_{1}.
$$
A quick computation reveals that
$$
\op{Disc}_{U}(H_{1})=2^{12}(ab)^{6}m^{4}y_{0}^{10}h_{11}h_{12}^{2}h_{13},
$$
where
\begin{align*}
h_{11}=&\;-4(c_{2} + 3m)y_{0}+Z^2,\\
h_{12}=&\;9y_0^2-4(3m+c_{2})y_0Z+Z^{3},\\
h_{13}=&\;-4(6m+c_2)y_0^3+8m(4m+c_2)y_0^2 Z+\\
       &-4m^2(3m+c_2)y_0 Z^2+Z^2(y_0-m Z)^2.
\end{align*}

Computation of the expression for $\op{Disc}_{U}(H_{2})$ took much longer; indeed, about 64 hours of CPU time. We have
$$
\op{Disc}_{U}(H_{2})=2^6(ab)^{15}m^{12}y_0^{21}(\op{Disc}_X(f))^2 h_{21}^{8} h_{23},
$$
where
\begin{align*}
h_{21}=&\;8y_0^2-4(3m+c_{2})y_0Z+Z^3,\\
h_{23}=&\; -y_0^5 + 2m(2c_2^2 + 28c_2m + 98m^2 - Z)y_0^4 +\\
       &-m(16c_2^2m + 160c_2m^2 + 336m^3 + c_2Z + 11m Z)y_0^3 Z+ \\
       & + 8m^2(2c_2^2m + 12c_2m^2 + 18m^3 + c_2Z + 5m Z)y_0^2 Z^2+\\
       & -m^2(8c_2m + 24m^2 + Z)y_0 Z^4 + m^3Z^6 .
\end{align*}
Finally, the expression for the resultant of the polynomials $H_{1}, H_{2}$ with respect to $U$ is the following:
\begin{equation*}
\op{Res}_{U}(H_{1},H_{2})=(ab)^{12}m^{12}y_0^{16}g_{1}^{8}g_{2}^{2},
\end{equation*}
where
\begin{align*}
g_{1}=&\;8y_0^2-4(3m+c_{2})Zy_0+Z^3,\\
g_{2}=&\; 27y_0^2 + 2(c_2 + 3m)(2c_2^2 + 12c_2m + 18m^2 - 9Z)y_0+\\
      &- (c_2^2 + 6c_2m + 9m^2 - 4Z)Z^2
\end{align*}
Observe that $h_{21}=g_{1}$. In consequence, if $h_{21}=0$ then $H_{1}$ and $H_{2}$ have a common factor and $H_{2}$ has at least one double root. \\ \\
We are interested only in $c_1, c_2, p_{0}, q_{0}, Y_{0}$, such that $(ap_{0}^2+bq_{0}^2)Y_{0}\neq 0$ and $\op{Disc}_{U}(H_{1}H_{2})=0$.  Observe that if $\op{Disc}_{U}(H_{1})=0$ and $\op{Disc}_{U}(H_{2})\neq 0$ then a necessary condition for $\cal{C}$ to have genus 1 is $\op{Res}_{U}(H_{1},H_{2})=0$. Similarly, if $\op{Disc}_{U}(H_{1})\neq 0$ and $\op{Disc}_{U}(H_{2})=0$ then a necessary condition for the curve $\cal{C}$ to have genus 1 is that either $H_{2}$ is the square of a cubic polynomial, or $\op{Res}_{U}(H_{1},H_{2})=0$. Finally, if $\op{Disc}_{U}(H_{1})\op{Disc}(H_{2})\neq 0$ then the only possibility for $\cal{C}$ to have genus 1 is $\op{Res}_{U}(H_{1},H_{2})=0$. \\ \\
We perform a case by case analysis on the equations $h_{1j}=0$ with $j=1,2,3$, $h_{2j}=0$ with $j=1,3$, and $g_2=0$. \\ \\
{\bf Case I}: suppose $h_{11}$ vanishes. Then since $y_0 \neq 0$,
\[ c_{2}=\frac{Z^2-12my_0}{4y_0}, \qquad c_1=Z-2 c_2 m-3 m^2 = \frac{6m^2 y_0+2y_0 Z-m Z^2}{2y_0}. \]
The corresponding discriminant $\op{Disc}_{X}(f)=\frac{1}{2}(Z^3-54 y_0^2)$,
which must be non-zero.
Substituting the expressions for $c_1,c_2$ into $H_1(U)$, we find $H_1(U)$ has the factor
$(a p_0+b q_0 U)^2$, and the cofactor, say $G_{1}$ (of degree 2) is irreducible.
Now
\begin{align*}
\op{Res}_U(H_1,H_2)= &\; 2^{22} (ab)^{12} m^{12} y_0^{32} (Z^3-54 y_0^2)^2 \neq 0, \\
\op{Disc}_U(H_2) = &\; 2^{28}(ab)^{15}m^{12}y_0^{41}(64m^3-y_0-2mZ)(Z^3-54y_0^2)^2,
\end{align*}
so that necessarily $\op{Disc}_U(H_2)=0$ implying $y_0=2m(32m^2-Z)$ and $\op{Disc}_{X}(f)=-\frac{1}{2}(24m^2-Z)(96m^2-Z)^2$. Accordingly, $Z\neq 24m^2, 32m^2, 96m^2$. There is now the factor
$(q_0-p_0 U)^2$ of $H_2(U)$ and irreducible cofactor $G_2(U)$ of degree $4$.
Since $\op{Res}_U(H_1,H_2) \neq 0$, it follows that $G_1,G_2$ cannot have a common root.
So $\cal{C}$ can have genus $1$ only if either $G_1$ or $G_2$ has a multiple root.
But
\begin{align*}
\op{Disc}_U(G_1) = &\; -2^{14} \cdot 3 \cdot (ab) m^6(24m^2-Z)(32m^2-Z)^5, \\
\op{Disc}_U(G_2) = &\; 2^{48} (ab)^{12} m^{28}(24m^2-Z)^2(32m^2-Z)^{25}(96m^2-Z),
\end{align*}
and by the above, neither can be zero.
In summary, the vanishing of $h_{11}$ does not lead to any curve of genus $1$.\\ \\
{\bf Case II}: suppose $h_{12}=0$. Let $\cal{C}_{12}$ be the curve
$$
\cal{C}_{12}:\;h_{12}(c_{1},c_{2})=0
$$
defined over the function field $\Q(m,y_0)$ in the plane $(c_{1},c_{2})$. The genus of the curve $\cal{C}_{12}$ is 0 and because it contains the $\Q(m, y_0)$-rational point at infinity $(-2m:1:0)$ we can find a rational parametrization, for example:
\[ c_{1}= -\frac{9m y_0^2-2u(3m^2+u)y_0+m u^3}{2u y_0}, \qquad c_{2}= \frac{9y_0^2-12m u y_0+u^3}{4u y_0}. \]
With these values of $c_1$ and $c_2$, $H_1(U)$ is divisible by the square of a quadratic irreducible over $\Q[u,m,y_0]$. Thus for $\cal{C}$ to have genus $1$, we require
$\op{Disc}_{U}(H_{2})\op{Res}_{U}(H_{1},H_{2})=0$.
This implies the vanishing of at least one of $h_{21}(=g_1)$, $h_{23}$, or $g_{2}$.
But with $c_1,c_2$ as above, we have $h_{21}=-y_0^2 \neq 0$; and $g_2=-\op{Disc}_X(f) \neq 0$.
We are thus left with $h_{23}=0$, which has become
$$
h_{23}=\frac{y_0^4(81my_0^2-4u(9m^2+u)y_0+mu^2(4m^2+u))}{4u^2}.
$$
As a polynomial in $y_0$, there is a non-zero rational root for $y_0$ precisely when the discriminant of the quadratic factor, namely $4u^3(4u-9m^2)$, is a perfect square.
Thus $u(4u-9m^2)=w^2$ for some $w$. This equation defines a conic
over $\Q(m)$ in the $(u,w)$ plane, with parametrization
$$
u=\frac{9m^{2}}{4-v^2}, \quad w=\frac{9m^2v}{4-v^2},
$$
where $v$ is a rational parameter. The roots for $y_0$ are now
\[ y_0 = \frac{m^3 (5-2v)}{(2-v)^2(2+v)}, \]
and that obtained by replacing $v$ by $-v$.
The corresponding $H_1(U),H_2(U)$ take the form
\[ H_1(U)=(4-v^2)\mbox{(square)} l_2(U)^2, \quad H_2(U)=-ab (2+v) \mbox{(square)} l_1(U)^2 l_4(U), \]
for polynomials $l_i(U)$ of degree $i$ which are irreducible over $\Q(v)[m]$.
On setting $v=2+t$, we have
\begin{align*}
(c_2,&c_1,c_0)=\\
     &\left(-\frac{m(1+t)(-1+13t+5t^2)}{(4+t)t(-1+2t)}, \frac{m^2(1+t)^2(7+4t)}{(4+t)t(-1+2t)}, -\frac{m^3(1+t)^4}{(4+t)t^2(-1+2t)}\right).
\end{align*}
Summing up, if $f(X)=X^3+c_{2}X^2+c_{1}X+c_{0}$, then on setting $(A,B)=(a p_0^2,b q_0^2)$ (with $m=A+B$), and replacing $U$ by $\frac{q_0}{p_0} U$, there exists a non-constant rational map $\psi_1: \cal{C}_1 \rightarrow \cal{W}_1$, where
\begin{equation}
\label{psi1}
\cal{C}_1: W^2 = A B t (k_4 U^4+k_3 U^3+k_2 U^2+k_1 U+k_0), \qquad \cal{W}_1: Y^2=f(ap^2+bq^2),
\end{equation}
and
\begin{align*}
k_0 &= -A^2((A+B)^2 + 2(A^2-B^2)t + A(A-3B)t^2 - A B t^3), \\
k_1 &= -2A^2B t(4+t)(A+B+(A-B)t),  \\
k_2 &= A B( -2(A+B)^2 + 3(A^2-6A B+B^2)t^2 + (A^2-4A B+B^2)t^3), \\
k_3 &= 2A B^2 t(4+t)(-(A+B) + (A-B)t),  \\
k_4 &= -B^2((A+B)^2 - 2(A^2-B^2)t - B(3A-B)t^2 - A B t^3).
\end{align*}
The discriminant of $\cal{C}_1$ is $16(ABmt)^{12}(1+t)^6(4+t)^2(1-2t)$, non-zero since $t \neq 0,-1,-4,\frac{1}{2}$; and thus $\cal{C}_1$ is of genus $1$. \\ \\
\noindent
{\bf Case III}: suppose $h_{13}=0$. Again, this defines a genus zero curve over the
field $\Q(m, y_0)$ in the plane $(c_{1}, c_{2})$. There is a triple singular point at infinity
$P_{0}=(-2m:1:0)$ which determines the following parametrization:
\begin{align}\label{h13zero}
c_{1}&=\frac{2(9m^2+u)y_0^3-5mu(4m^2+u)y_0^2+2m^2u^2(3m^2+2u)y_0-m^3u^4}{2(mu-y_0)^2y_0},  \\
c_{2}&=\frac{1}{2m}(u-3m^2-c_{1}),\notag
\end{align}
where $u$ is a rational parameter. With $c_{1}, c_{2}$ given by (\ref{h13zero}), then
$\op{Disc}_{X}(f)=-g_{2}$ and thus $g_{2}$ cannot be zero. Further, $H_{1}$ has
the factor $(q_0-p_0 U)^2$ and the cofactor, say $G_{1}$, is
of degree 2 and irreducible over $\Q[m, y_0]$. We can assume that $\op{Disc}_{U}(G_{1})\neq 0$,
because $\op{Disc}_{U}(G_{1})=0$ implies $h_{11}h_{12}=0$, cases we have already considered.
Finally, note that the numerator of $h_{23}$
is a divisor of the polynomial $y_0^{6}\op{Disc}_{X}(f)$
which implies that $h_{23}=0$ cannot be zero.
To reduce the genus of $\cal{C}$ we must therefore have $\op{Disc}_{U}(H_{2})=0$, i.e. $h_{21}=0$.
We have
$$
h_{21}=\frac{4(2y_0-mu)y_0^3}{(y_0-mu)^2}
$$
and thus $u=\frac{2y_0}{m}$. In consequence, the polynomial $(a+bU^2)^2$ divides $H_{2}(U)$,
and the square-free part of the polynomial $H_{1}H_{2}$ is of degree 4.
Finally, the coefficients $(c_2,c_1,c_0)$ have become $(-2m+\frac{y_0}{m^2}, m^2, 0)$.
Summing up: if
$$
f(X)=X\left(X^2+\left(-2m+\frac{y_0}{m^2}\right)X+m^2\right),
$$
then there is a non-constant rational map $\psi_{2}:\;\cal{C}_{2}\rightarrow \cal{W}_2$, where
\begin{equation}
\label{psi2}
\cal{C}_2:\;W^2=-aby_0(a+b U^2) (a(4a m^2 p_0^2-y_0) +8a b m^2 p_0 q_0 U +b(4b m^2 q_0^2-y_0) U^2)
\end{equation}
and
\begin{equation*}
\cal{W}_2:\;Y^2=f(ap^2+bq^2).
\end{equation*}
The second quadratic factor has discriminant $4aby_0(4 m^3-y_0)$, and if $y_0=4 m^3$, then $f(X)$ becomes
$X(X+m)^2$, disallowed. Thus $\cal{C}_2$ is of genus $1$. \\ \\
We can henceforth assume that $\op{Disc}(H_1) \neq 0$. \\ \\
{\bf Case IV}: suppose $h_{21}=0$. This equation defines a genus zero curve over the field $\Q(m,y_0)$ in the $(c_{1}, c_{2})$ plane with triple singular point at infinity $P_{0}=(-2m:1:0)$.
It has the parametrization:
\begin{equation}\label{h21zero}
c_{1}=-\frac{mu^3-2u(3m^2+u)y_0+8my_0^2}{2uy_0}, \qquad c_{2}=\frac{u^3-12muy_0+8y_0^2}{4uy_0}.
\end{equation}
where $u$ is a rational parameter.
With these values of $c_{1}, c_{2}$, then $H_1(U)=(a+bU^2) G_1(U)$, and
$H_2(U)=(a+bU^2)^2 G_2(U)$. By our assumptions, necessarily the discriminant of $G_{2}$
must vanish, equivalent to the vanishing of $h_{23}$.
We have
$$
h_{23}=\frac{y_0^5(16my_0-u^2)}{u^2},
$$
and thus $h_{23}=0$ if and only if $16my_0-u^2=0$. Put $m y_0=v^2,u=4v$, and then
$G_2=-a b (q_0-p_0 U)^2 y_0^5/m$, and the square-free part of $H_1(U)H_2(U)$ is of
degree $4$.
The coefficients $(c_2,c_1,c_0)$ are now
\[ \left( \frac{2m^2+v}{2m}, \quad 3v-5m^2, \quad \frac{(3m^2-2v)(2m^2-v)}{2m} \right). \]
Summing up: if
\begin{align*}
f(X) = &\; X^3 + \left(\frac{2m^2+v}{2m}\right) X^2 - (5m^2-3v) X +  \frac{(3m^2-2v)(2m^2-v)}{2m}, \\
     = &\; \left(X-\frac{2m^2-v}{2m}\right)\left(X^2 + 2m X - 3m^2+2v\right)
\end{align*}
then there is a non-constant rational map $\psi_{3}:\;\cal{C}_{3}\rightarrow \cal{W}_3$,
where
\begin{align}\label{psi3}
&\cal{C}_{3}:\; W^2=2ab(a+bU^2)(a(2amp_0^2-v)+4abmp_0q_0U+b(2mbq_0^2-v)U^2), \\
&\cal{W}_3:\; Y^2=f(ap^2+bq^2).\notag
\end{align}
The second quadratic factor in $\cal{C}_3$ has discriminant $4abv(2m^2-v)$, and $2m^2=v$
implies $f(X)$ has a repeated factor. Thus $\cal{C}_3$ is of genus $1$. \\ \\
{\bf Case V}: suppose $h_{23}=0$. Due to the assumption $\op{Disc}_{U}(H_{1})\neq 0$, there are two possibilities:

\begin{enumerate}
\item $\op{Res}_{U}(H_{1},H_{2})=0$ and thus $g_{2}=h_{23}=0$ (the possibility $g_{1}=0$
has been covered since $g_1=h_{21}$).
\item $\op{Res}_{U}(H_{1},H_{2})\neq 0$, in which case the necessary condition for
$\cal{C}$ to have genus 1 is that $H_{2}$ is a square up to multiplication by an element
of $\Q(m, Y)$.
\end{enumerate}
In the first instance, the resultant of $g_2$ and $h_{23}$ with respect to $y_0$ is
\[  (4c_1-c_2^2)(c_1+2c_2m+3m^2)^{10}(16c_1^2-8c_1c_2^2+c_2^4-16c_1c_2m+6c_2^3m+3c_1m^2)^2, \]
which must be zero. \\
If $4c_1=c_2^2$, then either $y_0=m(c_2+2m)^2/4$, or $y_0=(2c_2+3m)(c_2+6m)^2/108$, and in
both cases, $f(X)$ is singular. \\
If $c_1+2c_2m+3m^2=0$, then $y_0=-4(c_2+3m)^3/27$, and $f(X)$ is singular.  \\
If $16c_1^2-8c_1c_2^2+c_2^4-16c_1c_2m+6c_2^3m+3c_1m^2=0$, then this curve of genus $0$
has parametrization
\[ (c_1,c_2)=\left (\frac{(m+u)^3(-7m+u)}{256m^2}, \quad \frac{(m-u)(m+u)}{8m} \right). \]

Then either $y_0=(3m-u)^2(5m+u)^3/1024m^2$, or $y_0=(2m-u)(11m-u)^2(5m+u)^3/(27648m^3)$,
and in both cases $f(X)$ is singular. \\ \\
In the second instance, we form the ideal generated by the coefficients of $d_6 x^6+...+d_0 - d_6(x^3 + p x^2+q x+r)^2$ and obtain the elimination ideal in $d_6,...,d_0$ which Magma tells us has a basis comprising 45 terms. Substitute into the defining polynomials of this basis the coefficients of $H_2$; there result $45$ polynomials in the variables $A(=a p_0^2)$, $B(=b q_0^2)$, (so $m=A+B$), $c_2$, $y_0$, $Z$, where we seek common zeros. When $Z=0$, the only solutions that arise correspond to $\op{Disc}(f)=0$, so henceforth we suppose $Z \neq 0$.

The value $c_2 = \frac{8 y_0^2-12 m y_0 Z+Z^3}{4 y_0 Z}$ leads only to
$Z^2=16 m y_0$, giving $y_0=\frac{Z^2}{16m}$ and
\[ (c_2,c_1,c_0)=\left(\frac{8m^2 + Z}{8m}, \quad \frac{-20m^2 + 3Z}{4}, \quad \frac{(6m^2 - Z)(8m^2 - Z)}{16m} \right), \]
with
\begin{align*}
f(X) & = X^3 + \frac{8m^2 + Z}{8m} X^2 + \frac{-20m^2 + 3Z}{4} X + \frac{(6m^2 - Z)(8m^2 - Z)}{16m} \\
     & = \left(X-\frac{8m^2-Z}{8m}\right)\left(X^2+2m X -\frac{6m^2-Z}{2}\right).
\end{align*}
There is now a non-constant rational map $\psi_4: \cal{C}_4 \rightarrow \cal{W}_4$ where
\begin{align}\label{psi4}
&\cal{C}_4: W^2 = 2 a b (a+b U^2) (8m(a p_0 + b q_0 U)^2 -(a+b U^2)Z), \\
&\cal{W}_4: Y^2=f(ap^2+bq^2).\notag
\end{align}
The second quadratic factor in $\cal{C}_4$ has discriminant $4ab(8m^2-Z)Z$, whose vanishing implies $\op{Disc}(f)$ is zero. Thus $\cal{C}_4$ has genus $1$. \\ \\
It remains only to treat the case where $c_2 \neq \frac{8 y_0^2-12 m y_0 Z+Z^3}{4 y_0 Z}$. The problem reduces to finding common zeros of the $45$ polynomials, which are of high degree in $A$, $B$, $g$, $y_0$, $Z$.
This is a tedious computation, requiring careful book-keeping of the factors of various resultants (to compute some of these resultants took more than 66 hours of CPU time). After much effort and polynomial manipulation, we discover that all the zeros correspond to the vanishing of $\op{Disc}(f)$, so that no further solutions arise. \\ \\

{\bf Case VI}: the final case. Suppose $g_{2}=0$. The equation defines a genus zero curve over the field $\Q(m, Y)$ in the $(c_{1}, c_{2})$ plane, with singular point at infinity $P_{0}=(-2m:1:0)$. There is the following parametrization:
\begin{equation}\label{g2zero}
c_{1}=\frac{(u-m)(u^3-3mu^2+2y_0)}{u^2}, \quad c_{2}=\frac{2u^3-3mu^2+y_0}{u^2}
\end{equation}
where $u$ is a rational parameter. The corresponding $f(X)$ is singular. \\ \\
As in the case of Theorems \ref{biquartic1}--\ref{biquartic3} we omit the computational details of checking that the image of the map $\psi_{i}$ constructed above for $i\in\{1,2,3,4\}$ is not finite provided that the set $\cal{C}_{i}(\Q)$ is not finite, and summarize the above case by case analysis in the following:
\begin{thm}
Let $i \in \{1,2,3,4\}$ and suppose that the set of rational points on the curve $\cal{C}_i$ at {\rm  (\ref{psi1}), (\ref{psi2}), (\ref{psi3}), (\ref{psi4})} is infinite. Then the set of rational points on the corresponding surface $\cal{W}_i$ is infinite too.
\end{thm}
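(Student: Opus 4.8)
The plan is to repeat, for each $i\in\{1,2,3,4\}$, the argument that proves Theorem~\ref{firstthm}. Assume $\cal{C}_i(\Q)$ is infinite. By the convention at the head of Section~\ref{sec5}, ``$\cal{W}_i(\Q)$ infinite'' means $\pi_Y(\cal{W}_i(\Q))$ infinite; since $\psi_i$ maps $\cal{C}_i(\Q)$ into $\cal{W}_i(\Q)$ away from the finitely many points where it is undefined, and $\cal{C}_i(\Q)$ minus a finite set is still infinite, it suffices to show that for every $Y_1\in\Q$ there are only finitely many $(U,W)\in\cal{C}_i(\Q)$ with $\pi_Y(\psi_i(U,W))=Y_1$. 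Indeed, the $\pi_Y\circ\psi_i$-image of an infinite set all of whose fibres are finite is itself infinite, and that image is contained in $\pi_Y(\cal{W}_i(\Q))$.

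First I would make the $Y$-coordinate of $\psi_i$ explicit. By the construction in Section~\ref{sec5}, a point in the image of $\psi_i$ has the form $(p,q,Y)=(T+p_0,\,UT+q_0,\,VT^3+rT^2+sT+Y_0)$ as in (\ref{pqY}), where $V,r,s$ are the rational functions of $U$ given by (\ref{qrs:substitution}) and $T$ is a root of the quadratic $H_U(T)=0$ of (\ref{aftersubstitution}). On $\cal{C}_i$ the coordinate $W$ is a multiple of the square root of the discriminant $\Delta_U(H)$, so that $T=\lambda(U)W+\mu(U)$ for suitable rational functions $\lambda,\mu$; substituting this and repeatedly reducing by the relation $W^2=H_1(U)H_2(U)$ that defines $\cal{C}_i$, the $Y$-coordinate becomes a rational function $Y=A(U)+B(U)W$ on $\cal{C}_i$. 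To bound the fibre over a fixed $Y_1$ I would write $B(U)W=Y_1-A(U)$, square, and clear denominators; equivalently, eliminate $W$ between $Y=Y_1$ and the defining equation of $\cal{C}_i$. Either way one obtains a single polynomial equation in $U$ alone whose roots contain the $U$-coordinates of all points of $\cal{C}_i$ lying over $Y_1$.

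The crux is to verify that this eliminant in $U$ is not identically zero, for each of the four families and uniformly in the remaining free parameter ($t$ for $i=1$, $y_0$ for $i=2$, $v$ for $i=3$, $Z$ for $i=4$). As in the proof of Theorem~\ref{firstthm}, this is a finite but heavy Magma computation: one computes the leading coefficient of the eliminant with respect to $U$ (which does not involve $Y_1$) and checks it is a nonzero element of the function field over which $\cal{C}_i$ is defined. By analogy with Theorem~\ref{firstthm} I expect this coefficient to be, up to a power of $2$, a monomial in $a,b,p_0,q_0,m$ times precisely the irreducible factors whose non-vanishing was already invoked to make $\cal{C}_i$ of genus $1$ --- namely $(1+t)(4+t)(1-2t)$ for $\cal{C}_1$, $y_0(4m^3-y_0)$ for $\cal{C}_2$, $v(2m^2-v)$ for $\cal{C}_3$, and $(8m^2-Z)Z$ for $\cal{C}_4$ --- none of which vanish under the standing hypotheses. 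Granting this, the eliminant has only finitely many roots $U$, hence only finitely many $(U,W)\in\cal{C}_i(\Q)$ lie above any given $Y_1$, and the theorem follows.

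I expect this last verification to be the only genuine obstacle. Unlike in Theorem~\ref{firstthm}, where the eliminant had a short and transparent leading coefficient, here the curves $\cal{C}_i$ vary over a one-parameter base on which the non-vanishing must hold uniformly, the eliminant has large degree, and its leading coefficient is a polynomial in five or six quantities. One therefore has to carry out the elimination and then keep careful track of the factorisations of the resulting resultants, taking care not to be misled by specialisations of the parameters at which $\psi_i$ itself degenerates --- exactly the kind of long machine computation already encountered several times in Section~\ref{sec5}, and precisely what is meant here by ``we omit the computational details''.
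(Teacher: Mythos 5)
Your proposal follows exactly the paper's route: the authors explicitly state that they "omit the computational details of checking that the image of the map $\psi_i$ ... is not finite provided that $\mathcal{C}_i(\Q)$ is not finite," deferring to the fibre-finiteness-via-elimination argument of Theorem~\ref{firstthm} (and illustrated concretely in Example~\ref{exam1}, where the eliminant in $U$ is quartic). Your reconstruction of that argument, including the identification of the non-vanishing of the eliminant's leading coefficient as the one computational point to be verified, matches what the paper does and leaves unsaid.
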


\begin{exam}\label{exam1}
{\rm Take $(m,v)=(4,-16)$ in Case IV above. This leads to the elliptic curve
$$
E_{1}:\;Y^2=X^3+2X^2-128X+480=:f_{1}(X).
$$
Then $E_{1}(\Q)\simeq \Z\times\op{Tor}(E_{1}(\Q))$, where the infinite part of $E_{1}(\Q)$ is generated by the point $G_{1}=(10,-20)$ and $\op{Tor}(E_{1}(\Q))\simeq \Z/4\Z$ is generated by the point $P_{1}=(4,8)$. We are interested in rational points on the surface
$$
\cal{W}_{1,b}:\;Y^2=f_{1}(p^2+bq^2).
$$
Take the point $P_{1}$ lying on $E_{1}$ and follow the method presented. In this case $a=1$ and $(p_{0}, q_{0}, Y_{0})=(2,0,8)$. Following the construction above gives the genus $1$ curve
$$
\cal{C}_{1,b}:\;W^2=2b(bU^2+1)(bU^2+3)
$$
and map
$$
\phi_{1,b}:\;\cal{C}_{1,b}\ni (U,W)\mapsto (p,q,Y)\in\cal{W}_{1,b},
$$
given by
\begin{equation*}
p=\frac{2b^2U^3+W}{bU(bU^2+1)},\quad q=\frac{W-2bU}{b(bU^2+1)},\quad Y=\frac{-W(4b^2 U^2+W^2)}{b^3 U^3(1+b U^2)^2}.
\end{equation*}
Suppose that the set $\cal{C}_{1,b}(\Q)$ is infinite. In this case the image of the set $\cal{C}_{1,b}$ by the  map $\phi_{1,b}$ cannot be finite (in the sense mentioned above). Indeed, if $p(U,W)^2+bq(U,W)^2=p_{0}^2+bq_{0}^2$ for some fixed $p_{0}, q_{0}$ and the point $(U, W)$ lies on $\cal{C}_{1,b}$ then $U$ necessarily satisfies the equation
$$
b^2(p_0^2+bq_0^2-6)U^4+b(p_0^2+b q_0^2-8)U^2-6=0.
$$
Because this equation has at most four rational roots and the set $\cal{C}_{1,b}(\Q)$ is infinite, then clearly the set $\phi_{1,b}(\cal{C}_{1,b}(\Q))$ is infinite. \\ \\
This proves, for example, that the set of rational points on the surface $\cal{W}_{1,1}$ is infinite. Indeed, for $b=1$ the curve $\cal{C}_{1,1}$ is birationally equivalent to the elliptic curve
$$
\cal{E}_{1,1}:\;y^2=x^3-x^2-4x-2
$$
(which is $2$-isogenous to $E_1$).
The rank of $\cal{E}_{1,1}$ is $1$, with $\op{Tor}(\cal{E}_{1,1})\simeq \Z/2\Z$ generated by the point $(-1,0)$. The infinite part of $ \cal{E}_{1,1}(\Q)$ is generated by the point $P_{1,1}=\left(-\frac{3}{4},-\frac{1}{8}\right)$. This implies that the set of rational points on $\cal{C}_{1,1}(\Q)$ is infinite and the set $\phi_{1,1}(\cal{C}_{1,1}(\Q))$ is an infinite subset of the set $\cal{W}_{1,1}(\Q)$.

\begin{rem}
{\rm One can check that the only values of $b$ such that the curve $\cal{C}_{1,b}$ has infinitely many rational points are elements of the set $B=\{1,2,3,6\}$ with corresponding points of infinite order $\{(1,4), (1/4,15/4), (1,12), (2,90)\}$. Indeed, this is a consequence of the fact that the (finite) set of square-free parts of the integer points lying on the curve $v^2=u(u+2)(u+6)$ (obtained from $\cal{C}_{1,b}$ by the substitution $(b,W)=(T/2u^2,v/2u)$ is equal to $B$. As a consequence, the set of those rational points lying on $E_{1}$ with $X$-coordinate represented by the quadratic form $p^2+bq^2$, where $b\in B$, is infinite.}
\end{rem}
}
\end{exam}

\begin{rem}
{\rm Note that we can use the same method for the question of representations of $X$-coordinates of rational points on elliptic curves given by the equation
$$
\cal{C}:\;Y^2+a_{1}XY+a_{3}Y=X^3+a_{2}X^2+a_{4}X+a_{6},
$$
where $a_{i}$ are given integers such that the curve $\cal{C}$ is non-singular. However, in this case we expect even more difficult computations to arise then the ones encountered in Case V of this section and thus it was decided to consider only the case of curves $\cal{C}$ with $a_{1}=a_{3}=0$.

}
\end{rem}

\section{Questions and problems}\label{sec6}

In this section we state some problems and questions which are natural in the light of results presented above.

Let $f(X)=\sum_{i=0}^{4}c_{i}X^{i}\in\Q[x]$ be without multiple roots and define $\cal{C}$ to be the elliptic quartic curve
$$
\cal{C}:\;Y^2=f(X),
$$
which we assume to have infinitely many rational points. Let
$$
G=\{Q_{1},\ldots, Q_{r}, T_{1},\ldots, T_{m}\}
$$
be a set of generators for the set $\cal{C}(\Q)$. Here $r$ is the rank of $\cal{C}$, $Q_{i}$ is of infinite order for $i=1,\ldots, r$, and $T_{j}$ is of finite order for $j=1,\ldots, m$. In particular, for each $Q\in\cal{C}(\Q)$ we have $Q=\sum_{i=1}^{r}[n_{i}]Q_{i}+\sum_{j=1}^{m}[\epsilon_{j}]T_{j}$ for some $n_{i}\in\Z$ and $\epsilon_{j}\in A_{j}$, where $A_{j}\subset \N$ is finite and $|A_{j}|=$ order of the point $T_{j}$ for $j=1,\ldots, m$.

Assume further that the set of rational points on the surface
$$
\cal{V}:\;(ap^2+bq^2)^2=f(X)
$$
is infinite, and define the set
$$
\cal{S}:=\left\{(p,q,{\bf n}, {\bf \epsilon})\in\Q\times\Q\times\N^{r}\times A_{1}\times\ldots \times A_{m}:\;ap^2+bq^2=\pi_{Y}\left(\sum_{i=1}^{r}[n_{i}]Q_{i}+\sum_{j=1}^{m}[\epsilon_{j}]T_{j}\right)\right\},
$$
where to shorten notation we write ${\bf n}=(n_{1},\ldots,n_{r}), {\bf \epsilon}=(\epsilon_{1},\ldots,\epsilon_{m})$, and where $\pi_{Y}:\;\Q^2\rightarrow \Q$ is projection onto the $Y$-coordinate. In particular, our assumptions imply that the set $\cal{S}$ is infinite. Moreover, we assume that the method presented in Section \ref{sec2} and Section \ref{sec3} works for $f$, in that the related curve $\cal{C}'$ has infinitely many rational points and there is a map $\phi:\cal{C}'\rightarrow \cal{C}$ given by $\phi=(\phi_{1},\phi_{2})$ with $\phi_{2}$ being of the form $ap^2+bq^2$ for some rational functions $p, q$.\\ \\
We ask the following:
\begin{ques}\label{ques1}
What part of the set $\cal{S}$ is covered by the set $\phi(\cal{C}'(\Q))$?
\end{ques}
\begin{ques}\label{ques2}
Let $f\in\Q[X]$ be as above and suppose that the curve $\cal{C}$ has infinitely many rational points. Let $\mathcal{P}$ be the set of pairs $(a,b)$ of square-free integers such that the quadratic form $ap^2+bq^2$ is irreducible over $\Q$ and the set of rational points on the surface
$$
\cal{V}_{a,b}:\;(ap^2+bq^2)^2=f(X)
$$
is infinite. Can $\mathcal{P}$ be infinite?\\
Further, for any given pair $(a, b)\in\Z\setminus\{(\pm1,\mp1\}$ of square-free integers is it possible to find infinitely many quartic polynomials $f\in\Z[X]$ such that the surface $\cal{V}_{a,b}$ has infinitely many rational points? (We assume that the polynomial $f$ is not divisible by the fourth power of a non-zero integer $\neq 1$.)
\end{ques}


\begin{prob}\label{prob2}
Study analogs of Question~\ref{ques1} and Question~\ref{ques2} related to representation by a quadratic form of the $X$-coordinate of rational points on the elliptic curve $\cal{E}:\;Y^2=f(X)$, where $f$ is a cubic polynomial without multiple roots.
\end{prob}

Consider the curve $E:\;Y^2=f(X)$ with $f\in\Z[X]$, where $f$ is without multiple roots and $\op{deg}f=3,4$ depending on whether we ask about representations of $X$-coordinates  or $Y$-coordinates of rational points on $E$ by a binary quadratic form. Denote this set of represented coordinates by $S(E)$. It was noted that in each example of the application of our method throughout the paper, (part of) the set $S(E)$ is parameterized by a certain curve $\cal{C}$ of genus 1 {\it isogenous} to $E$. This suggests the following:

\begin{ques}\label{ques3}
Let $E:\;Y^2=f(X)$ where $f\in\Z[X]$ is without multiple roots and $\op{deg}f=3$ or 4. Suppose that the set $S(E)$ defined above is infinite. Does there exist a genus 1 curve $\cal{C}$ with infinitely many rational points and an isogeny $\phi:\;\cal{C}\rightarrow E$ such that $\pi(\phi(\cal{C}(\Q)))\subset S(E)$? (Here,  $\pi=\pi_{X}$ or $\pi=\pi_{Y}$ depending on whether we are asking about representability of $X$- or $Y$-coordinate.)
\end{ques}

Finally we ask the following:

\begin{ques}\label{ques4}
Does there exist a quartic polynomial $f_4$ (without multiple roots), and a binary cubic form $h(x,y) \in \Z[x,y]$, such that the surface
$$
\cal{S}_{1}:\;h(p,q)^2=f_{4}(X)
$$
has infinitely many non-trivial rational points?
Does there exist a cubic polynomial $f_3$ (without multiple roots), and a binary cubic form $h(x,y) \in \Z[x,y]$, such that the surface
$$
\cal{S}_{2}:\;Y^2=f_{3}(h(p,q))
$$
has infinitely many non-trivial rational points?
\end{ques}

\noindent
{\bf Acknowledgement}: All computations for this paper were carried out using Magma~\cite{Mag} and Mathematica~\cite{Wol}.

\vspace*{0.25in}

\noindent Andrew Bremner, School of Mathematics and Statistical Sciences, Arizona
State University, Tempe AZ 85287-1804, USA. e-mail: bremner@asu.edu

\bigskip

\noindent Maciej Ulas, Jagiellonian University, Institute of Mathematics,
{\L}ojasiewicza 6, 30-348 Krak\'ow, Poland. email:
Maciej.Ulas@im.uj.edu.pl

\end{document}